\newtheorem{theorem}{Theorem}
\newtheorem{lemma}{Lemma}
\newtheorem{definition}{Definition}
\newtheorem{remark}{Remark}
\newtheorem{corollary}{Corollary}
\title[On computable classes of equidistant sets...]{On computable classes of equidistant sets: multivariate equidistant functions}
\author{\'{A}. Nagy}
\address{Institute of Mathematics, University of Debrecen, H-4002 Debrecen, P. O. Box 400, Hungary}
\email{abris.nagy@science.unideb.hu}
\author{M. Ol\'{a}h}
\address{Institute of Mathematics, University of Debrecen, H-4002 Debrecen, P. O. Box 400, Hungary \newline
\indent ELKH-DE Equations, Functions, Curves and their Applications Research Group}
\email{olah.mark@science.unideb.hu}
\author{M. Stoika}
\address{Department of Mathematics and Informatics, Ferenc Rakoczi II Transcarpathian Hungarian \newline
\indent College of Higher Education, 90200 Beregszász, P.O.Box 33, Transcarpathia, Ukraine}
\email{sztojka.miroszlav@kmf.org.ua}
\author{Cs. Vincze}
\address{Institute of Mathematics, University of Debrecen, H-4002 Debrecen, P. O. Box 400, Hungary}
\email{csvincze@science.unideb.hu}
\keywords{Equidistant sets, Equidistant functions, Flat focal sets}
\subjclass{51M04}
\begin{document}
\begin{abstract}
An equidistant set in the Euclidean space consists of points having equal distances to both members of a given pair of sets, called focal sets. Having no effective formulas to compute the distance of a point and a set, it is hard to determine the points of an equidistant set in general. Instead of explicit computations we can use computer-assisted methods due to the basic theorem of M. Ponce and P. Santib\'a\~nez about the (Hausdorff) convergence of equidistant sets under the convergence of the focal sets. The authors also give an error estimation process in \cite{PS} to approximate the equidistant points. An alternative way of the approximation is based on finite focal sets as one of the most important computable classes of equidistant sets \cite{VVOFS}, see also \cite{VOL}. Special classes of equidistant sets allow us to approximate the equidistant points in more complicated cases. In what follows we have a hyperplane corresponding to the first order (linear) approximation for one of the focal sets and the second one is considered as the epigraph of a function. This idea results in the construction of equidistant functions. 

In the first part of the paper we prove that the equidistant points having equal distances to the epigraph of a positive-valued continuous function and its domain form the graph of a multivariate function. Therefore such an equidistant set is called a multivariate equidistant function. It is a higher-dimensional generalization of functions in \cite{VVOF} weakening the requirement of convexity as well. We also prove that the equidistant function one of whose focal sets is constituted by the pointwise minima of finitely many positive-valued continuous functions is given by the pointwise minima of the corresponding equidistant functions. In the second part of the paper we consider equidistant functions such that one of the focal sets is the epigraph of a convex function under some smoothness conditions. Independently of the dimension of the space we present a special parameterization for the equidistant points based on the closest point property of the epigraph as a convex set and we give the characterization of the equidistant functions as well. Illustrating how the formulas are working we present an example with a hyperboloid of revolution as one of the focal sets.
\end{abstract}
\maketitle

\section{Introduction: notations and preliminaries}

Let $K\subset \mathbb{R}^n$ be a subset in the Euclidean coordinate space. The distance between a point $x\in \mathbb{R}^n$ and $K$ is measured by the usual infimum formula
$$d(x, K) := \inf  \{ d(x, y) \ | \ y\in K \},$$
where 
$$d(x,y)=|x-y|=\sqrt{(x^1-y^1)^2+\ldots+(x^n - y^n)^2}$$
is the distance coming from the canonical inner product. Let us define the equidistant set of the focal sets $K$ and $L\subset \mathbb{R}^n$ as the set 
$$\{K=L\}:=\{x\in \mathbb{R}^n\ | \ d(x, K)=d(x, L)\}$$
all of whose points have the same distances from both $K$ and $L$. Equidistant sets can be considered as a generalization of conics. They are often called midsets. The systematic investigations have been started by Wilker's and Loveland's fundamental works \cite{Loveland} and \cite{Wilker}. "We find equidistant sets as conventionally
defined frontiers in territorial domain controversies: for instance, the United Nations
Convention on the Law of the Sea (Article 15) establishes that, in absence of any previous agreement, the delimitation of the territorial sea between countries occurs exactly on the median line every point of which is equidistant of the nearest points to each country"; for the citation see \cite{PS}. The points of an equidistant set are difficult to determine in general because there is no effective formula to compute the distance between a point and a set. Special classes of equidistant sets allow us to approximate the equidistant points in more complicated cases. In what follows we have a hyperplane corresponding to the first order (linear) approximation for one of the focal sets and the second one is considered as the epigraph of a function (at least locally), see Figure 1. This idea results in the construction of equidistant functions.

\begin{figure}[h]
\label{fig:01}
\centering
\includegraphics[scale=0.3]{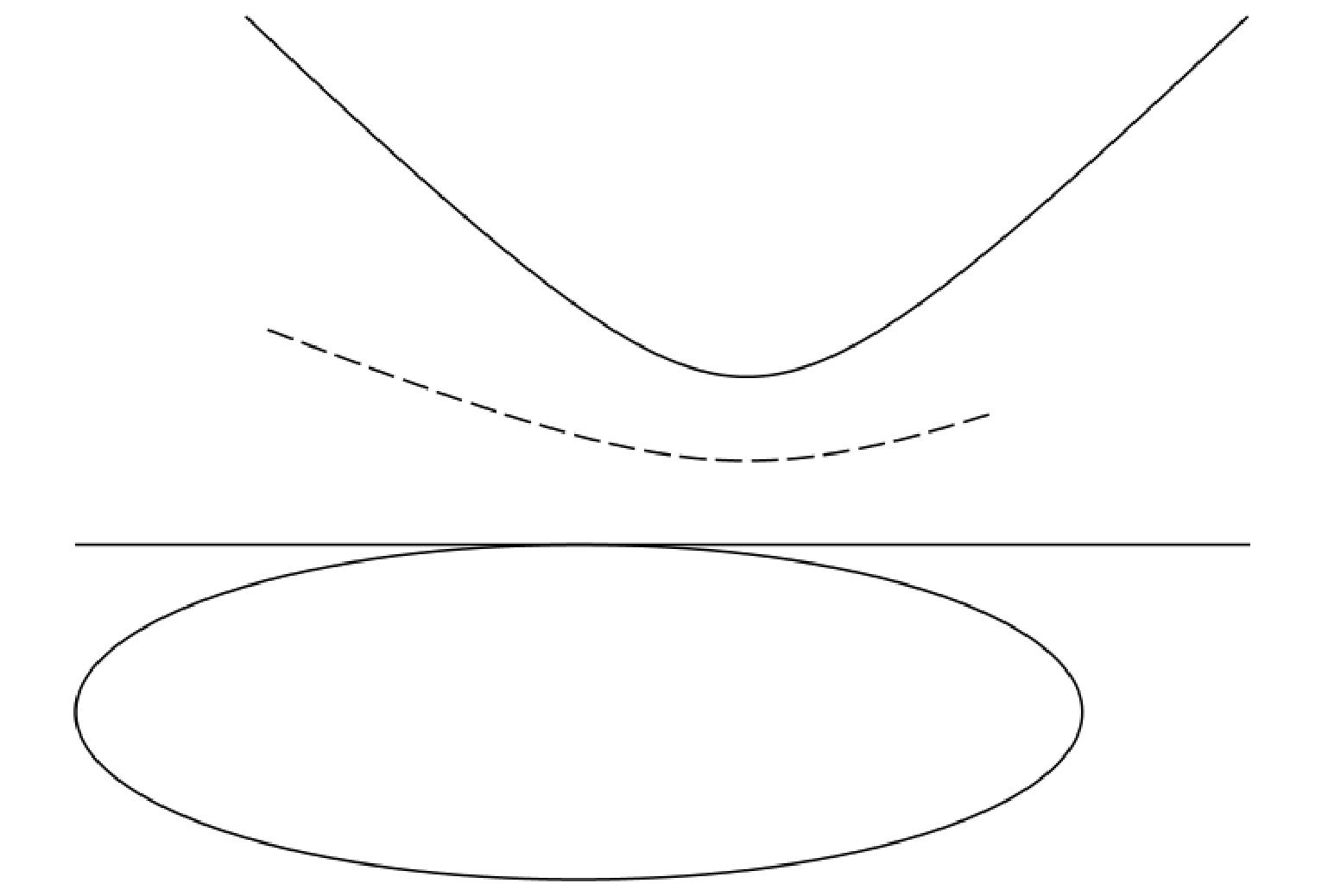}
\caption{The case of a convex function: linear approximation and equidistant points given by equidistant parameterization (see Subsection \ref{parameterization}).}
\end{figure}

In the first part of the paper we give some general observations about the so-called equidistant functions. They are special equidistant sets with the epigraph of a positive-valued continuous function and its domain as focal sets. We prove among others that the minimum operator acts on the family of finitely many equidistant functions in a natural way: the equidistant function one of whose focal sets is constituted by the pointwise minima of finitely many positive-valued continuous functions is given by the pointwise minima of the corresponding equidistant functions, see Figure 2.

\begin{figure}[h]
\label{fig:02}
\centering
\includegraphics[scale=0.3]{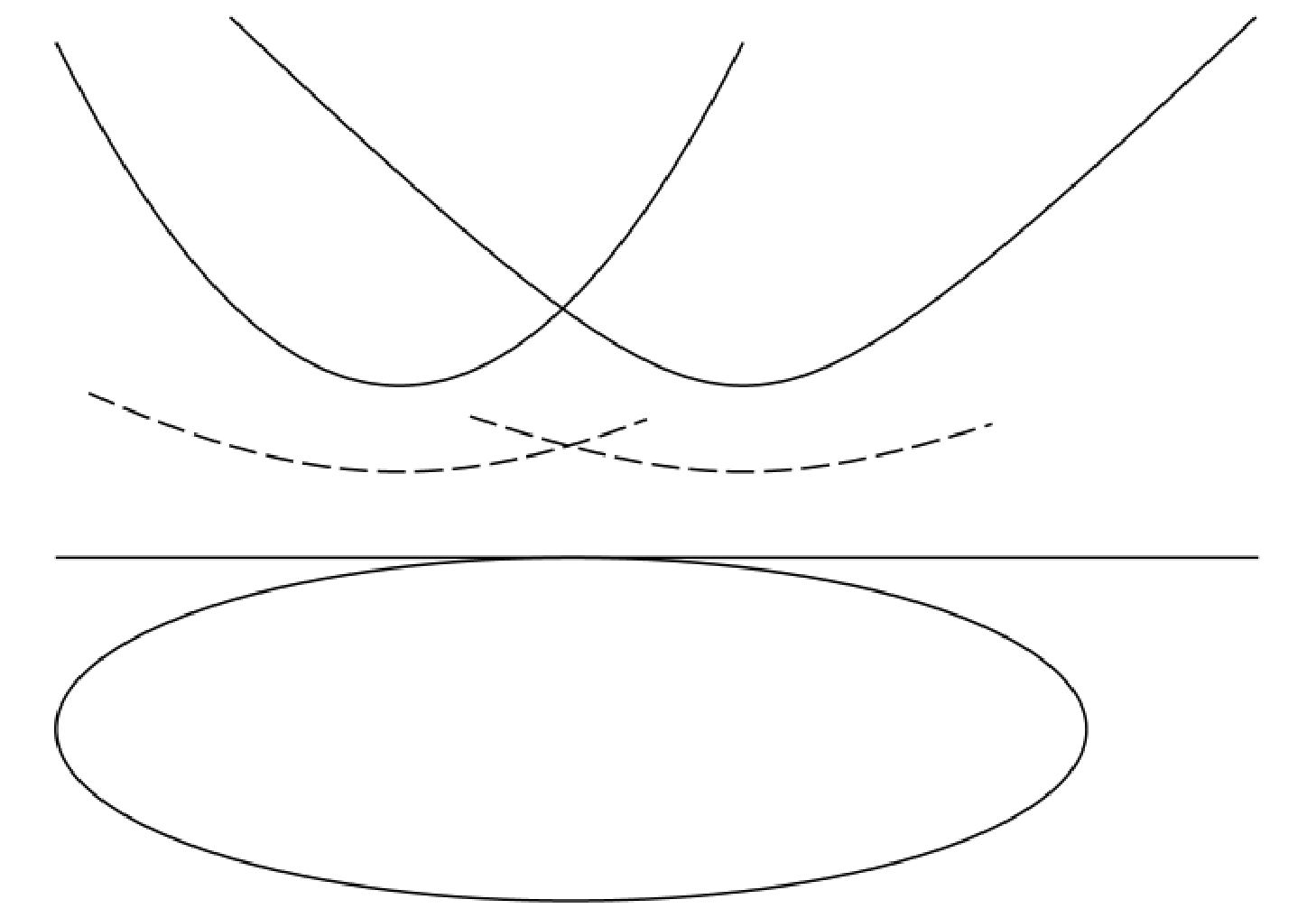}
\caption{The case of a min-function: see Theorem \ref{basic:01c}.}
\end{figure} 

In the second part of the paper we consider equidistant functions such that one of the focal sets is the epigraph of a convex function under some smoothness conditions. Independently of the dimension of the space we present a special parameterization for the equidistant points based on the closest point property of the epigraph as a convex set and we give the characterization of the equidistant functions as well. Illustrating how the formulas are working we present an example with a hyperboloid of revolution as one of the focal sets. 

Let $L\subset \mathbb{R}^n$ be a nonempty closed subset and for any $x\in \mathbb{R}^n$ let us introduce the set of closest points
$$L(x):=\{ y\in L\ | \ d(x,L)=d(x,y)\}\subset L.$$
  
\begin{lemma}
\label{lemma:first} If $L\subset \mathbb{R}^n$ is a nonempty closed subset, then  the mapping $h(x):=d(x,L)$ is Lipschitz continuous satisfying inequality
\begin{equation}
\label{ineq:first}
|h(x_2)-h(x_1)|\leq d(x_2, x_1)
\end{equation}
and equality
\begin{equation}
\label{eq:first}
h(x_2)-h(x_1)=d(x_2, x_1)
\end{equation}
holds if and only if $L(x_1)\subset L(x_2)$ and $x_2 - x_1 - L(x_1)$, that is $x_1$ is between $x_2$ and $y_1$ for any $y_1\in L(x_1)$ provided that $x_1\neq x_2$.
\end{lemma}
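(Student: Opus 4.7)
\medskip

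\noindent\textbf{Proof plan.} The argument splits into the Lipschitz estimate, then the two directions of the equality characterization. Before anything, I would record that closedness of $L$ together with the finite-dimensionality of $\mathbb{R}^n$ guarantees $L(x)\neq\emptyset$ for every $x$: intersect $L$ with the closed ball $\bar B(x,d(x,L)+1)$ to reduce to a compact set on which the continuous function $y\mapsto d(x,y)$ attains its infimum. This will be needed in both directions of the equality statement.

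For inequality \eqref{ineq:first}, I would exploit the definition of infimum and the triangle inequality: for every $y\in L$,
$$h(x_2)\leq d(x_2,y)\leq d(x_2,x_1)+d(x_1,y).$$
Taking the infimum over $y\in L$ on the right gives $h(x_2)-h(x_1)\leq d(x_2,x_1)$. Swapping the roles of $x_1$ and $x_2$ yields the companion inequality, and together they produce \eqref{ineq:first} (and thus Lipschitz continuity with constant $1$).

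For the sufficiency part of the equality statement, suppose $L(x_1)\subset L(x_2)$ and that $x_1$ lies between $x_2$ and any $y_1\in L(x_1)$. Pick some $y_1\in L(x_1)$ (nonempty by the opening remark). Then $h(x_1)=d(x_1,y_1)$ by definition of $L(x_1)$, and $h(x_2)=d(x_2,y_1)$ since $y_1\in L(x_2)$. The betweenness hypothesis means $d(x_2,y_1)=d(x_2,x_1)+d(x_1,y_1)$, so $h(x_2)-h(x_1)=d(x_2,x_1)$, which is \eqref{eq:first}.

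For the necessity part, assume \eqref{eq:first} holds with $x_1\neq x_2$, and take any $y_1\in L(x_1)$. Chaining the estimates from the Lipschitz step gives
$$h(x_2)\leq d(x_2,y_1)\leq d(x_2,x_1)+d(x_1,y_1)=d(x_2,x_1)+h(x_1)=h(x_2),$$
so both intermediate inequalities are equalities. The first equality says $d(x_2,y_1)=h(x_2)$, i.e.\ $y_1\in L(x_2)$, and since $y_1\in L(x_1)$ was arbitrary we obtain $L(x_1)\subset L(x_2)$. The second equality is the equality case of the triangle inequality in Euclidean space; by strict convexity of the norm this forces $x_1$ to lie on the segment joining $x_2$ and $y_1$, which is precisely the betweenness condition. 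The only delicate point in the whole argument is this last appeal to the equality case of the Euclidean triangle inequality, but it is standard and uses that $\mathbb{R}^n$ is strictly convex as a normed space.
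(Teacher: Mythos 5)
Your proof is correct and follows essentially the same route as the paper's: the triangle-inequality chain $h(x_2)\leq d(x_2,y_1)\leq d(x_2,x_1)+d(x_1,y_1)$ through a closest point $y_1\in L(x_1)$, with the equality case handled by forcing both intermediate inequalities to be equalities. You simply spell out the two directions of the equality characterization and the nonemptiness of $L(x)$ in more detail than the paper, which compresses all of this into a single sentence.
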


\begin{proof} Taking $y_1\in L(x_1)$, it follows that 
$$h(x_2)=d(x_2, L)\leq d(x_2, y_1)\leq d(x_2, x_1)+d(x_1, y_1)=$$
$$d(x_2, x_1)+d(x_1, L)=d(x_2, x_1)+h(x_1)$$
and, consequently, 
$$h(x_2)-h(x_1)\leq d(x_2, x_1).$$
Equality \eqref{eq:first} holds if and only if $y_1\in L(x_2)$ and $x_2 - x_1 - y_1$ for any $y_1\in L(x_1)$.
Changing the role of the points we have inequality \eqref{ineq:first}. 
\end{proof}

\section{General observations}
\label{basic} 

Consider the focal sets 
$$K := \{(t,0) \in \mathbb{R}^{n+1} \mid t \in \mathbb{R}^n \} \ \ \textrm{and}\ \ L := \{(t,f(t)) \in \mathbb{R}^{n+1} \mid t \in \mathbb{R}^n \},$$
where $f\colon \mathbb{R}^n\to \mathbb{R}^{+}$ is a positive-valued continuous function. First of all we clarify that the equidistant set $\{K=L\}$ can be given as the graph of a function $G\colon \mathbb{R}^n\to \mathbb {R}^+$. 

\begin{remark}
\label{remark:01}
Since $f$ is a positive-valued function, it is clear that there are no equidistant points above its graph and we can take its epigraph $L=\textrm{epi\ } f$ as a focal set in an equivalent way.
\end{remark}

\begin{lemma} 
\label{basic:01}
For any $x\in \mathbb{R}^n$, there is a uniquely determined positive real number $G(x)$ such that 
$$d((x,G(x)), L)=d((x,G(x)),K).$$
Inequalities $G(x)<y$ and $y<G(x)$ imply that 
$$d((x,y), L)< d((x,y),K) \quad \textrm{and} \quad d((x,y), K)< d((x,y),L),$$
respectively.  
\end{lemma}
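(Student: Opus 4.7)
The strategy is to reduce the equidistant condition to finding a zero of a single real-valued function, and then to apply the monotonicity and the equality case built into Lemma \ref{lemma:first}.

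For a fixed $x\in\mathbb{R}^n$, the distance from $(x,y)$ to the hyperplane $K$ is simply $|y|$, so I would set $h(y):=d((x,y),L)$ and $\varphi_x(y):=h(y)-y$ on $[0,\infty)$; the equidistance $d((x,G(x)),L)=d((x,G(x)),K)$ is exactly $\varphi_x(G(x))=0$. Inequality \eqref{ineq:first} applied to the pair $(x,y_1),(x,y_2)\in\mathbb{R}^{n+1}$ gives $|h(y_2)-h(y_1)|\le |y_2-y_1|$, so $h$ is $1$-Lipschitz in the vertical variable; in particular $\varphi_x$ is continuous and non-increasing.

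For existence, I would evaluate $\varphi_x$ at two convenient heights. Since $L$ is closed (as the graph, equivalently the epigraph by Remark \ref{remark:01}, of a continuous function) and $(x,0)\notin L$ because $f(x)>0$, we have $\varphi_x(0)=d((x,0),L)>0$. At the other end, $(x,f(x))\in L$ yields $h(f(x))=0$, and hence $\varphi_x(f(x))=-f(x)<0$. Continuity then produces some $G(x)\in(0,f(x))$ with $\varphi_x(G(x))=0$, which is automatically positive.

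The delicate point is uniqueness, and here I would invoke the equality clause of Lemma \ref{lemma:first}. Suppose $0<a<b$ both satisfy $\varphi_x(a)=\varphi_x(b)=0$; then $h(b)-h(a)=b-a$, so the lemma guarantees a closest point $p\in L((x,a))$ such that $(x,a)$ lies between $(x,b)$ and $p$ on the vertical line $\{x\}\times\mathbb{R}$. Writing $p=(x,a-t)$ with $t=d((x,a),p)=h(a)=a$ forces $p=(x,0)$, contradicting $(x,0)\notin L$. The two strict inequalities in the statement then follow at once: for $y<G(x)$, monotonicity gives $\varphi_x(y)\ge 0$, with strict inequality by the uniqueness just established, and symmetrically for $y>G(x)$. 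I expect this uniqueness step to be the main obstacle, but the equality case of Lemma \ref{lemma:first} reduces the only possible counterexample to a configuration ruled out by the positivity of $f$.
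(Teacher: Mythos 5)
Your proof is correct and follows essentially the same route as the paper's: the intermediate value theorem applied to $z \mapsto d((x,z),L)-z$ between $z=0$ and $z=f(x)$ for existence, and the equality case of Lemma \ref{lemma:first} along a vertical line for uniqueness and for the two strict inequalities. The only differences are cosmetic --- you locate the forced closest point at $(x,0)\notin L$, whereas the paper locates it at $(x,f(x))$ and invokes the absence of equidistant points above the graph; and you leave the trivial case $y\le 0$ of the second inequality implicit, which the paper dispatches in one clause.
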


\begin{proof}
Let $x\in \mathbb{R}^n$ be an arbitrary point in the space. Since the function $d((x, z), L)-z$
is positive at $z=0$ and negative at $z=f(x)$ it follows, by a continuity argument, that there is a positive real number $G(x)$ satisfying equation $d((x, G(x)), L)=G(x)=d((x, G(x)), K)$. It is uniquely determined because there are no equidistant points with $y\leq 0$ and equidistant points at different heights $0<y_1 < y_2$ give a contradiction as follows:
$$d((x, y_2), L)-d((x, y_1), L)=y_2 - y_1=d((x,y_2), (x,y_1)),$$
that is equality of type \eqref{eq:first} holds. Using Lemma \ref{lemma:first}, it follows that $L(x,y_1)\subset L(x,y_2)$ and $(x, y_2) - (x,y_1) - L(x, y_1)$. Since the points $(x, y_i)$ $(i=1, 2)$ determine a vertical line, we have an arrangement of the form  
$$(x, y_2) - (x,y_1) - (x, f(x))$$
but there are no equidistant points above the graph of the function $f$. Therefore it is impossible to find different equidistant points along a vertical line. In particular, by choosing $y_1=G(x)< y=y_2$, Lemma \ref{lemma:first} shows that 
$$d((x, y_2), L)-d((x, y_1), L) < d((x,y_2), (x,y_1))=y_2 - y_1\quad \Rightarrow \quad $$
$$d((x, y), L) < y=d((x,y), K)$$
because equality of type \eqref{eq:first} must be avoided. We can finish the proof by choosing $0<y_1=y < G(x)=y_2$ as follows:
$$d((x, y_2), L)-d((x, y_1), L) < d((x,y_2), (x,y_1))=y_2 - y_1\quad \Rightarrow \quad $$
$$d((x,y), K)=y < d((x, y), L)$$
and the inequality is automatically satisfied in case of $y\leq 0$. 
\end{proof}

\begin{definition} A function $G\colon \mathbb{R}^n\to \mathbb {R}^+$ is an equidistant function if its graph is the equidistant set of $K$ and $L$ for a positive-valued continuous function $f$.  
\end{definition}

\begin{lemma}
\label{basic:02} The equidistant function $G\colon \mathbb{R}^n\to \mathbb {R}^+$ belonging to a positive-valued continuous function $f\colon \mathbb{R}^n\to \mathbb{R}^+$ is continuous.  
\end{lemma}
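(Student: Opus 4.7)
The plan is a standard compactness/subsequence argument based on the implicit characterization of $G$: for each $x$, $G(x)$ is the unique positive number with $d((x,G(x)),L)=G(x)$, where I use throughout that $d((x,y),K)=|y|$ because $K$ is the horizontal hyperplane. By Lemma~\ref{lemma:first} the map $(x,y)\mapsto d((x,y),L)$ is $1$-Lipschitz in the Euclidean norm on $\mathbb{R}^{n+1}$, which will drive the limiting argument.

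First I would establish a local bound on $G$. Since every equidistant point lies strictly below the graph of $f$ (otherwise $d(\cdot,L)=0$ while $d(\cdot,K)>0$, as noted in Remark~\ref{remark:01} and the proof of Lemma~\ref{basic:01}), we have $0<G(x)<f(x)$ for all $x$. Continuity of $f$ then gives that $G$ is locally bounded: on any neighborhood of a fixed point $x_0\in\mathbb{R}^n$ the values $G(x)$ stay in a bounded interval.

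Next, fix $x_0$ and a sequence $x_k\to x_0$; I would prove $G(x_k)\to G(x_0)$ by showing that every subsequence has a further subsequence converging to $G(x_0)$. Using the local bound, extract a subsequence, still written $x_k$, such that $G(x_k)\to G^*\ge 0$. Passing to the limit in
\begin{equation*}
d\bigl((x_k,G(x_k)),L\bigr)=G(x_k)
\end{equation*}
via Lipschitz continuity of $d(\cdot,L)$ yields $d((x_0,G^*),L)=G^*=d((x_0,G^*),K)$. The case $G^*=0$ is excluded because it would force $(x_0,0)\in L=\mathrm{epi}\,f$, contradicting $f(x_0)>0$. Hence $G^*>0$, so by the uniqueness part of Lemma~\ref{basic:01} we must have $G^*=G(x_0)$.

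There is no real obstacle here; the only points needing care are the local boundedness (to be allowed to extract convergent subsequences) and the exclusion of the limit $G^*=0$, both of which follow directly from the positivity of $f$. The rest is a transparent application of the Lipschitz estimate in Lemma~\ref{lemma:first} together with the uniqueness in Lemma~\ref{basic:01}.
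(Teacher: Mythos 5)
Your proposal is correct and follows essentially the same route as the paper: bound $G(x_k)$ locally by $f$ to extract convergent subsequences, pass to the limit in the equidistant equation using continuity of the distance functions, and invoke uniqueness from Lemma~\ref{basic:01} to identify the limit. Your explicit exclusion of the degenerate limit $G^*=0$ is a small point of extra care that the paper leaves implicit, but it is not a different argument.
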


\begin{proof} Taking $x_n\to x$, the sequence $y_n=G(x_n)$ is obviously bounded because of the continuity of the function $f$: for all but finitely many indices, 
$$0<y_n=G(x_n)< f(x_n) < f(x)+\varepsilon.$$
Therefore it has a convergent subsequence. Since the distance-measuring functions $d((x, y), L)$ and $d((x,y),K)$ are continuous, it can easily be seen that any convergent subsequence of $y_n$ gives a subsequence of $(x_n, y_n)$ tending to a point $(x,y)$ such that
$$d((x, y), L)=d((x,y),K).$$ 
The unicity of the equidistant point at $x$ implies that $y=G(x)$ is the common limit of the convergent subsequences of $y_n=G(x_n)$. Therefore $y_n=G(x_n)\to y=G(x)$. 
\end{proof}

\begin{theorem}
\label{basic:01c} Let $I$ be a finite nonempty index set and consider the family $\left\{f_i\mid i\in I\right\}$ of positive-valued continuous functions defined on $\mathbb{R}^n$ with corresponding equidistant functions $\left\{G_i\mid i\in I\right\}.$ 
If $G_{\min}$ is the equidistant function belonging to the function
	\[f_{\min}\colon \mathbb{R}^n\to\mathbb{R},\quad f_{\min}(x)=\min \left\{f_i(x)\mid i\in I\right\},
\]then
	\[G_{\min}(x)=\min \left\{G_i(x)\mid i\in I\right\}
\]for any $x\in\mathbb{R}^n$.
\end{theorem}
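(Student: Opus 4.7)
The plan is to exploit the fact that taking a pointwise minimum of functions corresponds to taking a union of their epigraphs. Concretely, I would first observe that
\[\operatorname{epi} f_{\min} \;=\; \bigcup_{i\in I}\operatorname{epi} f_i,\]
because $y\geq f_{\min}(t)=\min_i f_i(t)$ holds if and only if $y\geq f_i(t)$ for at least one $i$. Writing $L_i=\operatorname{epi} f_i$ and $L_{\min}=\operatorname{epi} f_{\min}$ (cf.\ Remark \ref{remark:01}), the distance to a union of sets is the minimum of the distances, so
\[d\bigl((x,y),L_{\min}\bigr)=\min_{i\in I} d\bigl((x,y),L_i\bigr)\]
for every $(x,y)\in\mathbb{R}^{n+1}$. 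This is the single structural fact that drives everything.

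Next I would fix $x\in\mathbb{R}^n$, pick an index $i_0\in I$ achieving $G_{i_0}(x)=\min_{i\in I} G_i(x)$, and set $y_0:=G_{i_0}(x)$. For any $j\in I$ one has $y_0\leq G_j(x)$, so by Lemma \ref{basic:01} applied to the equidistant function $G_j$ (the case $y<G_j(x)$, together with the trivial equality case $y=G_j(x)$) we obtain
\[d\bigl((x,y_0),L_j\bigr)\;\geq\;d\bigl((x,y_0),K\bigr)\;=\;y_0,\]
with equality for $j=i_0$ by the defining property of $G_{i_0}$. Taking the minimum over $j\in I$ and using the union formula gives
\[d\bigl((x,y_0),L_{\min}\bigr)\;=\;y_0\;=\;d\bigl((x,y_0),K\bigr),\]
so $(x,y_0)$ is an equidistant point for the focal sets $K$ and $L_{\min}$.

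Finally, I would invoke the uniqueness part of Lemma \ref{basic:01} applied to the function $f_{\min}$: the equidistant height above $x$ is uniquely determined, and since $(x,y_0)$ realises it, we conclude $G_{\min}(x)=y_0=\min_{i\in I}G_i(x)$, as required. I do not foresee a genuine obstacle; the only point that deserves a sentence of care is the identification $\operatorname{epi} f_{\min}=\bigcup_i\operatorname{epi} f_i$, which is what converts the pointwise-minimum operation on functions into a union operation on focal sets and thereby into a minimum of distance functions.
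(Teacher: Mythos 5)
Your proof is correct and rests on the same two ingredients as the paper's: the identification $\operatorname{epi} f_{\min}=\bigcup_{i\in I}\operatorname{epi} f_i$ (so that the distance to $L_{\min}$ is the minimum of the distances to the $L_i$) together with Lemma \ref{basic:01}. The only cosmetic difference is that you verify directly that the candidate point $\bigl(x,\min_i G_i(x)\bigr)$ is equidistant and then invoke uniqueness, whereas the paper shows that every $y$ strictly above (resp.\ below) $\min_i G_i(x)$ is strictly above (resp.\ below) $G_{\min}(x)$; both routes are sound.
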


\begin{proof} 
Since all functions are positive-valued, we can use the epigraphs as focal sets in the sense of Remark \ref{remark:01}. Let $x\in \mathbb{R}^n$ be an arbitrary point, $L_i:=\textrm{epi\ } f_i$ $(i\in I)$ and suppose that 
$$y>\min \left\{G_i(x)\mid i\in I\right\}.$$
Then there exists at least one index $i\in I$ such that $y>G_i(x)$ and Lemma \ref{basic:01} implies that $d((x,y),L_i)<d((x,y),K)$. Since $L:=\textrm{epi\ } f_{\min}=\cup_{i\in I\ }\textrm{epi\ } f_i=\cup_{i\in I\ } L_i$, we have
	\[d((x,y),L)=d((x,y),\cup_{i\in I\ }L_i)\leq d((x,y),L_i)<d((x,y),K)
\]and Lemma \ref{basic:01} implies that $y>G_{\min}(x)$. On the other hand, suppose that 
$$y<\min \left\{G_i(x)\mid i\in I\right\}.$$
Then $y<G_i(x)$ for all $i\in I$ and Lemma \ref{basic:01} implies that $d((x,y),K)<d((x,y), L_i)$ for all $i\in I$. Thus  
	\[d((x,y),K)<\min \left\{d((x,y), L_i)\mid i\in I\right\}=d((x,y),\cup_{i\in I\ } L_i)=d((x,y), L)
\]and Lemma \ref{basic:01} implies that $y<G_{\min}(x)$. To sum up, $y>\min \left\{G_i(x)\mid i\in I\right\}$ implies that $y>G_{\min}(x)$ and $y<\min \left\{G_i(x)\mid i\in I\right\}$ implies that $y<G_{\min}(x)$. Therefore $G_{\min}(x)=\min \left\{G_i(x)\mid i\in I\right\}$.
\end{proof}

\begin{corollary}
\label{basic:03} The mapping $f\mapsto G$ is monotone in the sense that $f_1 < f_2$ implies that $G_1 < G_2$ for the pointwise ordering.
\end{corollary}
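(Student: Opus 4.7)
The plan is to deduce Corollary \ref{basic:03} as a direct specialization of Theorem \ref{basic:01c} for the weak inequality, and then to upgrade to strict inequality with a short geometric argument. For the weak part, I apply the theorem to the two-element family $\{f_1, f_2\}$: since $f_1 < f_2$ pointwise, the pointwise minimum $f_{\min}$ coincides with $f_1$, so by the uniqueness statement in Lemma \ref{basic:01} the equidistant function $G_{\min}$ associated with $f_{\min}$ is exactly $G_1$. On the other hand, Theorem \ref{basic:01c} asserts $G_{\min}(x) = \min\{G_1(x), G_2(x)\}$. Equating the two expressions forces $G_1(x) = \min\{G_1(x), G_2(x)\}$ at every $x$, which is precisely $G_1 \leq G_2$ in the pointwise ordering.

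To promote this to the strict inequality, I would work directly from Lemma \ref{basic:01}. Fix $x \in \mathbb{R}^n$ and set $y := G_1(x)$, so that $d((x,y), K) = y = d((x,y), L_1)$. The set-theoretic inclusion $L_2 = \textrm{epi\ } f_2 \subsetneq \textrm{epi\ } f_1 = L_1$ immediately yields $d((x,y), L_2) \geq d((x,y), L_1) = y$. When this inequality is strict, Lemma \ref{basic:01} applied to $G_2$ delivers $G_1(x) = y < G_2(x)$, which is exactly what we want. It remains only to exclude the equality case $d((x,y), L_2) = y$.

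In that case, any closest point $q = (s,r) \in L_2$ to $(x,y)$ satisfies $d((x,y), q) = y$ and is automatically also a closest point of $L_1$. Since $q \in L_2$, one has $r \geq f_2(s) > f_1(s)$, so replacing the second coordinate $r$ with $f_1(s)$ produces the point $(s, f_1(s)) \in L_1$. Expanding $|s-x|^2 + (r-y)^2 = y^2$ and comparing with $|s-x|^2 + (f_1(s)-y)^2$ shows that either $(s, f_1(s))$ is strictly closer to $(x,y)$ than $q$ is (contradicting minimality), or one is pushed into the degenerate configuration $r = y$, $|s - x| = y$ and $f_1(s) < y$. This last configuration is ruled out using the continuity of $f_1$: the sublevel set $\{t \in \mathbb{R}^n : f_1(t) < y\}$ is open and contains $s$, and since $|s - x| = y > 0$ we may displace $s$ infinitesimally toward $x$ to obtain $t'$ with $f_1(t') < y$ and $|t' - x| < y$; then $(t', y) \in L_1$ lies at distance $|t' - x| < y$ from $(x,y)$, contradicting $d((x,y), L_1) = y$.

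The main obstacle is precisely the degenerate equality case in the previous paragraph. The weak monotonicity $G_1 \leq G_2$ is a one-line consequence of Theorem \ref{basic:01c}, whereas the strict version genuinely uses both the continuity of $f_1$ and the positivity $y = G_1(x) > 0$ (so that $s \neq x$ and the perturbation toward $x$ is meaningful) to produce an improving perturbation inside $L_1$.
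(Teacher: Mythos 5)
The paper offers no written proof of Corollary \ref{basic:03}; it is presented as an immediate consequence of Theorem \ref{basic:01c}. Your first paragraph is exactly that intended derivation, and you are right to observe that it only yields the weak inequality $G_1\leq G_2$: taking $f_{\min}=f_1$ and invoking the uniqueness in Lemma \ref{basic:01} gives $G_1=\min\{G_1,G_2\}$ and nothing more. So your decision to supply a separate geometric argument for strictness is well taken, and the overall strategy --- show that $d((x,y),L_2)>y$ at $y=G_1(x)$ and conclude via Lemma \ref{basic:01} --- is the right one.

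There is, however, one under-justified step in your treatment of the equality case $d((x,y),L_2)=y$. Comparing the closest point $q=(s,r)$ only with $(s,f_1(s))$ does not force the degenerate configuration $r=y$, $|s-x|=y$: the failure of strict improvement reads $(f_1(s)-y)^2\geq(r-y)^2$, and (for example) $f_1(s)=y/2$, $r=1.2\,y$ satisfies this with $r\neq y$. To reach your degenerate configuration you must also compare $q$ with the point $(s,y)$: once $f_1(s)<y$ is established, $(s,y)\in L_1$ lies at distance $|s-x|\leq y$ from $(x,y)$, with equality only if $r=y$; this is what actually pins down $r=y$ and $|s-x|=y$, after which your perturbation of $s$ toward $x$ (using the continuity of $f_1$ and $y=G_1(x)>0$) correctly finishes the proof. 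A cleaner way to dispose of the whole equality case is to note that $r\geq f_2(s)>f_1(s)$ places $q$ in the interior of $\textrm{epi\ } f_1$, whereas a closest point of a closed set to an external point must lie on its boundary (here, the graph of $f_1$); this contradiction makes the degenerate configuration and the perturbation argument unnecessary.
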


\begin{lemma}
\label{basic:04} The equidistant function $G\colon \mathbb{R}^n\to \mathbb {R}^+$ belonging to a convex function $f\colon \mathbb{R}^n\to \mathbb{R}^+$ is convex. 
\end{lemma}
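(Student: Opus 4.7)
The plan is to deduce convexity of $G$ from convexity of the epigraph $L:=\textrm{epi\ } f$, using Lemma \ref{basic:01} in its contrapositive form. Fix $x_1,x_2\in\mathbb{R}^n$ and $\lambda\in[0,1]$, and set $x:=\lambda x_1+(1-\lambda)x_2$ together with $y:=\lambda G(x_1)+(1-\lambda)G(x_2)$. By Lemma \ref{basic:01}, to conclude $G(x)\leq y$ (which is exactly the convexity inequality) it suffices to verify that
$$d((x,y),L)\leq d((x,y),K)=y,$$
since the strict inequality $G(x)>y$ would force $d((x,y),K)<d((x,y),L)$, a contradiction.

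To produce such a bound, I would first invoke closed points in $L$. Since $f$ is convex and continuous (as a real-valued convex function on $\mathbb{R}^n$), the epigraph $L$ is a closed convex subset of $\mathbb{R}^{n+1}$, so for each $i=1,2$ there exists (a unique) $p_i\in L(x_i,G(x_i))$ with
$$|(x_i,G(x_i))-p_i|=d((x_i,G(x_i)),L)=G(x_i).$$
Convexity of $L$ then yields $q:=\lambda p_1+(1-\lambda)p_2\in L$, and the triangle inequality gives
$$d((x,y),L)\leq |(x,y)-q|=\bigl|\lambda((x_1,G(x_1))-p_1)+(1-\lambda)((x_2,G(x_2))-p_2)\bigr|\leq \lambda G(x_1)+(1-\lambda)G(x_2)=y,$$
which is precisely the estimate needed.

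Combining the two steps, Lemma \ref{basic:01} forces $G(x)\leq y=\lambda G(x_1)+(1-\lambda)G(x_2)$, and since $x_1,x_2,\lambda$ were arbitrary, $G$ is convex. There is no real obstacle here: the only subtle point is ensuring that closest points $p_i\in L$ actually exist, which is handled by the closedness of $\textrm{epi\ } f$ (continuity of $f$ being automatic for a real-valued convex function on $\mathbb{R}^n$); beyond that, the argument is a direct transfer of convexity from the focal set $L$ to the equidistant function $G$ via the contrapositive of Lemma \ref{basic:01}.
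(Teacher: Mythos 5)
Your proof is correct and follows essentially the same route as the paper: both arguments reduce convexity of $G$ to the estimate $d((x,y),L)\leq y$ at the convex combination point and then invoke Lemma \ref{basic:01} to rule out $y<G(x)$. The only difference is that you prove the convexity of the distance function $d(\cdot,L)$ to the convex epigraph from scratch (via closest points and the triangle inequality), whereas the paper simply quotes it as a known fact.
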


\begin{proof} Since convexity implies continuity, the epigraph of the function $f$ is a closed convex subset. Therefore the distance-measuring function $d((x, y), L)$ is convex in its first variable, that is 
\begin{equation}
\label{eq:001}
d((x, y), L)\leq (1-\lambda)d((x_1, y_1), L)+\lambda d((x_2, y_2), L)=
\end{equation}
$$(1-\lambda)y_1+\lambda y_2=y=d((x,y),K),$$
where 
$$(x,y)=(1-\lambda)(x_1,y_1)+\lambda (x_2,y_2),$$ 
$G(x_1)=y_1$ and $G(x_2)=y_2$. Using Lemma \ref{basic:01}, inequality $y<G(x)\in \mathbb{R}^+$ implies that  
$$d((x, y), K) < d((x, y), L)$$
contradicting inequality \eqref{eq:001}. Therefore 
$$G(x)\leq y=(1-\lambda)y_1+\lambda y_2=(1-\lambda)G(x_1)+\lambda G(x_2)$$
as was to be proved. 
\end{proof}

\section{Equidistant functions belonging to positive-valued twice differentiable convex functions}

In what follows we are going to characterize the equidistant functions belonging to positive-valued twice continuously differentiable convex functions. As an important intermediate step, the equidistant points will be given in terms of special parametric expressions \eqref{eq:x} and \eqref{eq:y}. A detailed analysis of the parametric expressions results in necessary and sufficient conditions for a function $G\colon \mathbb{R}^n\to \mathbb {R}^+$ to be an equidistant function. Consider the focal sets 
$$K := \{(t,0) \in \mathbb{R}^{n+1} \mid t \in \mathbb{R}^n \} \ \ \textrm{and}\ \ L := \{(t,f(t)) \in \mathbb{R}^{n+1} \mid t \in \mathbb{R}^n \},$$
where $f\colon \mathbb{R}^n\to \mathbb{R}^{+}$ is a (positive-valued) twice continuously differentiable convex function.

\subsection{The parametric expressions of the equidistant points}
\label{parameterization}

Using the Euler-Monge parameterization $F(t)=(t, f(t))$ we have the normal vector field
$$\partial_1 F \times \ldots \times \partial_n F=\det \begin{bmatrix}
	e_1&\ldots & e_n&e_{n+1}\\
	1 & \ldots & 0&\partial_1 f\\
  %0 & 1 & \ldots & 0&\partial_2 f\\
	  \vdots & \vdots & \vdots & \vdots \\
  0 & \ldots & 1& \partial_n f
	\end{bmatrix}=
(-1)^{n+1}(\nabla f, -1),$$
where $e_1, \ldots, e_n, e_{n+1}$ is the canonical basis in $\mathbb{R}^{n+1}$ and $\nabla f$ denotes the gradient of the function $f$. The outer unit normal to the graph is
$$N=\frac{1}{\sqrt{1+|\nabla f|^2}}(\nabla f, -1).$$

\begin{theorem}
\label{parametric} The equidistant points can be written in the parametric form
\begin{equation}\label{eq:x}
  x(t) = t+\frac{f(t)}{1+ \sqrt{1+|\nabla f|^{2}(t)}}\nabla f(t)
	\end{equation} 
	and
	\begin{equation}
	\label{eq:y}
	y(t) = \frac{f(t)\sqrt{1+|\nabla f|^2(t)}}{1+ \sqrt{1+|\nabla f|^2(t)}},
 \end{equation}
where $t\in \mathbb{R}^n$ and $\nabla f$ denotes the gradient of the function $f$.
\end{theorem}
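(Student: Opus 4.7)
\emph{Proof plan.} The idea is to exploit the convexity of $L=\textrm{epi}\, f$ in order to parameterize the equidistant points by their closest points on the graph of $f$. Fix $t\in\mathbb{R}^n$, set $F(t)=(t,f(t))$, and consider the outward-normal ray $P(r):=F(t)+rN(t)$, $r\geq 0$. Because $L$ is a closed convex set and $N(t)$ points into the complementary open half-space at $F(t)$, the standard closest-point property of convex sets forces $F(t)$ to be the unique closest point of $L$ to $P(r)$, so $d(P(r),L)=r$. This converts the problem of producing the equidistant point above $x(t)$ into a one-variable condition on $r$.

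To turn this into the formulas, I would next compute $d(P(r),K)$ explicitly. Since the last coordinate of $N(t)$ equals $-1/\sqrt{1+|\nabla f|^{2}(t)}$, the $y$-coordinate of $P(r)$ is $f(t)-r/\sqrt{1+|\nabla f|^{2}(t)}$, and as long as this is non-negative it coincides with $d(P(r),K)$. The equidistance condition $d(P(r),L)=d(P(r),K)$ then collapses to a single linear equation in $r$, whose explicit solution, substituted back into $P(r)$, gives exactly \eqref{eq:x} in the first $n$ coordinates and \eqref{eq:y} in the last one. A short algebraic check shows that the resulting $y(t)$ is strictly positive, confirming that the sign assumption used to identify the $y$-coordinate with $d(P(r),K)$ is indeed satisfied, and that $(x(t),y(t))$ is an admissible equidistant point.

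It remains to verify that every equidistant point arises this way, i.e., that $t$ truly parameterizes all of them. Given an equidistant point $(x,G(x))$, the inequality $G(x)<f(x)$ coming from the construction in Lemma~\ref{basic:01} places it strictly outside the closed convex set $L$; hence it has a unique closest point, which lies on the boundary $\partial L$, i.e., on the graph of $f$ at some $F(t)$. Smoothness of the graph then forces the vector $(x,G(x))-F(t)$ to be parallel to $N(t)$, and since $(x,G(x))$ lies in the open lower half-space at $F(t)$ the orientation matches $N(t)$ itself rather than $-N(t)$. The equidistance condition then pins down the same value of $r$ as before and yields $(x,G(x))=(x(t),y(t))$. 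I expect the main obstacle to be precisely this surjectivity step: one must rule out the possibility that the closest point sits in the interior of $\textrm{epi}\, f$ and justify the orientation of the normal; both require careful use of the equidistant point being strictly below the graph and of the closed convexity of $L$.
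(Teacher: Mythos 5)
Your proposal is correct and rests on exactly the same mechanism as the paper's proof: the unique closest point of the convex epigraph to an equidistant point lies on the graph of $f$, the difference vector is a multiple of the outer unit normal $N(t)$, and the equidistance condition fixes that multiple to be $y$, after which the last coordinate yields a linear equation whose solution gives \eqref{eq:x} and \eqref{eq:y}. The only difference is one of presentation: you spell out both inclusions (every $t$ produces an equidistant point via the normal ray, and every equidistant point arises from some $t$), whereas the paper argues only the second direction and leaves the first implicit; this is a harmless refinement, not a different route.
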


\begin{proof} The equidistant points are characterized by the equation $d((x,y), L)=y$. In other words
\begin{equation}
\label{eq:closest}
d((x, y), (t,f(t)))=y,
\end{equation}
where $(t, f(t))$ is the uniquely determined closest point of the convex epigraph to the point $(x,y)$. This means that the difference vector $(x,y)-(t,f(t))$ is proportional to the outer unit normal, that is
$$(x,y)=(t,f(t))+s N(t)$$
and equation \eqref{eq:closest} is equivalent to
$$(x,y)=(t,f(t))+y N(t).$$
In a more detailed form
$$x=t+\frac{y}{\sqrt{1+|\nabla f|^2(t)}}\nabla f(t)\ \ \textrm{and}\ \ y=f(t)-\frac{y}{\sqrt{1+|\nabla f|^2(t)}}.$$ 
The second equation allows us to express $y$ in terms of the parameter $t$ and we have equations \eqref{eq:x} and \eqref{eq:y}. 
\end{proof}

\begin{definition} The pair of the parametric expressions $x\colon \mathbb{R}^n\to \mathbb{R}^n$ and $y\colon \mathbb{R}^n\to \mathbb{R}^+$ is called the equidistant parameterization for the graph of the equidistant function.
\end{definition}

\begin{theorem}
\label{analysis} The mapping $x\colon \mathbb{R}^n\to \mathbb{R}^n$,
$$x(t) = t+\frac{f(t)}{1+ \sqrt{1+|\nabla f|^{2}(t)}}\nabla f(t)$$
is a one-to-one correspondence with nowhere vanishing Jacobian.
\end{theorem}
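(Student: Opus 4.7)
The statement packages three claims --- injectivity, surjectivity, and nonvanishing of the Jacobian --- that I would address in turn. The first two follow from the uniqueness results already in hand; the Jacobian computation, in which convexity of $f$ is essential, is the main obstacle.

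For injectivity, suppose $x(t_1)=x(t_2)$. The points $(x(t_i),y(t_i))$ both lie on the equidistant set and share a first coordinate, so Lemma \ref{basic:01} forces $y(t_1)=y(t_2)$; the two parameter values then give two foot points $(t_i,f(t_i))\in L$ of a single exterior point of the closed convex set $L=\operatorname{epi}(f)$, and uniqueness of closest points on a convex set yields $t_1=t_2$. For surjectivity, given any $x_0\in\mathbb{R}^n$ Lemma \ref{basic:01} furnishes the equidistant point $(x_0,G(x_0))$, which has a unique closest point $(t_0,f(t_0))\in L$; re-running the derivation from the proof of Theorem \ref{parametric} with this closest point produces $x_0=x(t_0)$.

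For the Jacobian I would write $x(t)=t+g(t)\nabla f(t)$ with $g(t)=f(t)/(1+\sqrt{1+|\nabla f(t)|^{2}})>0$, giving
\[
Dx(t)=I+\nabla f(t)\,(\nabla g(t))^{T}+g(t)\operatorname{Hess}f(t).
\]
My plan is to route the analysis through the normal map $\tilde\Phi(t,s)=(t,f(t))+sN(t)$, for which $(x(t),y(t))=\tilde\Phi(t,y(t))$. A putative $v\in\ker Dx(t)$ would make
\[
\textstyle\sum_{j}v^{j}\,\partial_{j}\bigl(\tilde\Phi(t,y(t))\bigr)=(0,\,v\cdot\nabla y)
\]
purely vertical; decomposing this vector along the tangent/normal splitting of the parallel hypersurface at signed distance $y(t)$ from the graph (whose unit normal remains $N(t)$), the normal part first forces $v\cdot\nabla y=0$, and then the tangent part reduces to $(I-y(t)S)v=0$, where $S$ is the shape operator of the graph of $f$ with respect to the outward unit normal $N$.

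The use of convexity enters precisely at this last step: because $\operatorname{Hess}f\succeq 0$ and $N$ points away from the epigraph, every eigenvalue of $S$ is non-positive, so for $y(t)>0$ the matrix $I-y(t)S$ has all eigenvalues at least $1$ and is invertible, forcing $v=0$. I expect the only delicate point to be keeping the sign of the shape operator straight; if that becomes awkward, I would abandon the geometric route and instead verify invertibility by hand, expanding $\nabla g$ explicitly in terms of $\nabla f$ and $\operatorname{Hess}f$ and using the Cauchy--Schwarz inequality for the positive semidefinite form $\operatorname{Hess}f$ to show $v^{T}(Dx(t))v>0$ for every $v\neq 0$.
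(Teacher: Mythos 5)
Your treatment of injectivity and surjectivity coincides with the paper's: both reduce to the existence and uniqueness of the nearest point of the convex epigraph to the equidistant point $(x_0,G(x_0))$. For the Jacobian, however, you take a genuinely different route. The paper computes $\partial_v x$ explicitly and pairs it with the tilted test vector $v+\frac{\partial_v f}{1+\sqrt{1+|\nabla f|^2}}\nabla f$, obtaining
$$\Bigl\langle \partial_v x,\ v+\tfrac{\partial_v f}{1+\sqrt{1+|\nabla f|^2}}\nabla f\Bigr\rangle\geq |v|^2$$
directly from $f>0$ and $f''(v,v)\geq 0$; this is a purely algebraic argument whose only subtlety is guessing the right vector to pair with. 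Your argument instead runs through the normal map $\tilde\Phi(t,s)=(t,f(t))+sN(t)$: the normal component kills $v\cdot\nabla y$, and the tangential component becomes $(I-y(t)S)w=0$ for the lift $w=(v,\partial_v f)$ of $v$ to the tangent plane, which forces $w=0$ since the shape operator with respect to the outward (downward) normal of a convex graph is negative semidefinite and $y(t)>0$. This is the classical fact that the outward normal exponential map of a convex hypersurface is nonsingular, and it buys conceptual clarity (one sees exactly where convexity and the sign of $y$ enter) at the cost of the sign bookkeeping you flag. Both are complete proofs.

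One caution about your fallback plan: showing $v^{T}Dx(t)\,v>0$ for all $v\neq 0$ is \emph{not} achievable, because the symmetric part of $Dx(t)$ need not be positive definite. From the paper's own formula,
$$\langle \partial_v x, v \rangle= |v|^2 +\frac{(\partial_v f)^2+f f''(v,v)}{1+ \sqrt{1+|\nabla f|^{2}}}-\frac{f\, f''(\nabla f,v)\,\partial_v f}{\bigl(1+ \sqrt{1+|\nabla f|^{2}}\bigr)^2\sqrt{1+|\nabla f|^{2}}},$$
and taking a point where $f=20$, $\nabla f=(1,1)$, $\operatorname{Hess}f=\operatorname{diag}(0,M)$ with $M$ large and $v=(1,1/M)$ makes the last term dominate, so $\langle\partial_v x,v\rangle<0$. (Such a jet is realized by a globally positive, smooth convex function.) This is precisely why the paper pairs $\partial_v x$ with the tilted vector rather than with $v$ itself; if you abandon the geometric route, you must adopt that same correction.
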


\begin{proof}
If $x_0\in \mathbb{R}^n$ is an arbitrary point in the space then $(x_0, G(x_0))$ is an equidistant point and $x(t_0)=x_0$, where $(t_0, f(t_0))$ is the closest point of the epigraph to $(x_0, G(x_0))$. To prove the one-to-one property suppose that $x(t_1)=x(t_2)=x_0$.
Since the closest point of the epigraph to $(x_0, G(x_0))$ is uniquely determined, it follows that $(t_1, f(t_1))=(t_2, f(t_2))$, that is $t_1=t_2$. The directional derivative along $v\in \mathbb{R}^n$ is 
\begin{gather}
\label{dirder:x}\partial_v x = v+\frac{\partial_v f\left(1+ \sqrt{1+|\nabla f|^{2}}\right)\sqrt{1+|\nabla f|^{2}}-f\langle \nabla f, \partial_v \nabla f\rangle}{\left(1+ \sqrt{1+|\nabla f|^{2}}\right)^2 \sqrt{1+|\nabla f|^{2}}}\nabla f+\\
\frac{f}{1+ \sqrt{1+|\nabla f|^{2}}}\partial_v \nabla f.\notag
\end{gather}
Taking the inner product with $\nabla f$ and $v\in \mathbb{R}^n$, respectively, we have
\begin{equation}
\label{innerprod:grad}
\langle \partial_v x, \nabla f \rangle= \partial_v f \left(1+\frac{| \nabla f |^2}{1+ \sqrt{1+|\nabla f|^{2}}}\right)+\frac{f \langle \nabla f, \partial_v \nabla f\rangle}{\sqrt{1+|\nabla f|^{2}}\left(1+ \sqrt{1+|\nabla f|^{2}}\right)}
\end{equation}
and
\begin{equation}
\label{innerprod:v}
\langle \partial_v x, v \rangle= |v|^2 +\frac{\left(\partial_v f\right)^2+f f''(v,v)}{1+ \sqrt{1+|\nabla f|^{2}}}-\frac{f \langle \nabla f, \partial_v \nabla f\rangle}{\left(1+ \sqrt{1+|\nabla f|^{2}}\right)^2\sqrt{1+|\nabla f|^{2}}}\partial_v f.
\end{equation}
Therefore
$$\langle \partial_v x, v+\frac{\partial_v f}{1+ \sqrt{1+|\nabla f|^{2}}} \nabla f\rangle=$$
$$=|v|^2 +\frac{\left(\partial_v f\right)^2+f f''(v,v)}{1+ \sqrt{1+|\nabla f|^{2}}}+\frac{\left(\partial_v f\right)^2}{1+ \sqrt{1+|\nabla f|^{2}}}\left(1+\frac{| \nabla f |^2}{1+ \sqrt{1+|\nabla f|^{2}}}\right)\geq |v|^2$$
because $f$ is a positive-valued convex function. The inequality shows that for any $t\in \mathbb{R}^n$, the mapping $x'(t)$ is regular:  $x'(t)(v)=\partial_v x (t)=0 \quad \Leftrightarrow \quad v=0.$
\end{proof}

\begin{corollary}
\label{specformG:01} Using the equidistant parameterization for the graph of the equidistant function, it can be given as $G=y\circ x^{-1}$.
\end{corollary}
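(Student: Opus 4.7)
The plan is to combine Theorem \ref{parametric} (parametric form of equidistant points), Theorem \ref{analysis} (bijectivity of $x$), and Lemma \ref{basic:01} (uniqueness of the equidistant point in each vertical line) into a one-line identification.

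First, I would fix an arbitrary $x_0\in\mathbb{R}^n$. By Lemma \ref{basic:01}, the point $(x_0,G(x_0))$ is the unique equidistant point lying on the vertical line through $x_0$; in particular it belongs to the equidistant set $\{K=L\}$. Theorem \ref{parametric} tells us that every equidistant point is of the form $(x(t),y(t))$ for some $t\in\mathbb{R}^n$, so there exists $t_0$ with $x(t_0)=x_0$ and $y(t_0)=G(x_0)$. By Theorem \ref{analysis} the mapping $x$ is a bijection of $\mathbb{R}^n$ onto itself, so $t_0=x^{-1}(x_0)$ is uniquely determined, and the second equation gives
\[
G(x_0)=y(t_0)=y\bigl(x^{-1}(x_0)\bigr)=(y\circ x^{-1})(x_0).
\]
Since $x_0$ was arbitrary, $G=y\circ x^{-1}$.

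There is no real obstacle here: the main work was already absorbed into Theorem \ref{analysis}, where both injectivity and surjectivity of $x$ are verified (surjectivity by reading off $t_0$ as the foot of the perpendicular from $(x_0,G(x_0))$ to the epigraph, and injectivity by the uniqueness of that foot for a strictly convex epigraph). The only point worth emphasizing in the write-up is that the inverse $x^{-1}$ is well-defined globally on $\mathbb{R}^n$, so the formula $G=y\circ x^{-1}$ holds on all of $\mathbb{R}^n$ and not merely locally.
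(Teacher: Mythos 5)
Your proposal is correct and matches the paper's intent exactly: the corollary is stated without proof precisely because it is the immediate combination of Theorem \ref{parametric} (every equidistant point is $(x(t),y(t))$), Theorem \ref{analysis} (the map $x$ is a global bijection), and Lemma \ref{basic:01} (uniqueness of the equidistant point on each vertical line), which is what you assemble. One trivial remark: the uniqueness of the closest point used for injectivity comes from the epigraph being a closed \emph{convex} set, not from any strict convexity of $f$.
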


In what follows we are going to express the function $f$ in terms of the equidistant parameterization for the graph of the equidistant function. Using equations \eqref{eq:x} and \eqref{eq:y}, 

\begin{equation}
\label{intro:g} g(t):=\frac{x(t)-t}{y(t)}=\frac{1}{\sqrt{1+|\nabla f|^2}}\nabla f \ \Rightarrow \ \frac{g}{\sqrt{1-|g|^2}}=\nabla f,
\end{equation}
where $|g(t)|< 1$ for any $t\in \mathbb{R}^n$. Since the convexity of $f$ is equivalent to the monotonicity of its gradient, it   follows that the mapping
$$X:=\frac{g}{\sqrt{1-|g|^2}}$$	
is a conservative vector field satisfying the monotonicity property $\langle X(t_1)-X(t_2), t_1-t_2\rangle\geq 0$ and its potential is  
\begin{equation}
\label{eq:pot}
f(t)\stackrel{\eqref{eq:x}, \eqref{eq:y}}{=}y(t)+\sqrt{y^2(t)-|x(t)-t|^2}=y(t)\left(1+\sqrt{1-|g(t)|^2}\right).
\end{equation}
We are going to formulate necessary and sufficient conditions for
\begin{equation}
\label{necandsuf:g}
\frac{g}{\sqrt{1-|g|^2}}=\nabla f, \ \ \textrm{where} \ \ f(t)=y(t)+\sqrt{y^2(t)-|x(t)-t|^2}.
\end{equation}

\begin{theorem}
\label{thm:grad}
Using the notation \eqref{intro:g}, equation \eqref{necandsuf:g} is equivalent to  
\begin{equation}
\label{grad:04}
\frac{\langle \partial_i x , g\rangle}{1+\sqrt{1-|g|^2}}=\partial_i y \qquad (i=1, \ldots,n). 
\end{equation}
\end{theorem}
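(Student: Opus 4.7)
The plan is to prove the equivalence by differentiating the potential formula \eqref{eq:pot} for $f$ directly and showing that, componentwise, the equation $\nabla f = g/\sqrt{1-|g|^2}$ collapses algebraically to the claimed condition \eqref{grad:04}.

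First I would use the form $f(t)=y(t)+\sqrt{y^2(t)-|x(t)-t|^2}$ rather than the product form $f=y(1+\sqrt{1-|g|^2})$, since the square root has a clean structure under the chain rule. Differentiating with respect to $t_i$ gives
\[
\partial_i f=\partial_i y+\frac{y\,\partial_i y-\langle x-t,\partial_i x-e_i\rangle}{\sqrt{y^2-|x-t|^2}}=\partial_i y+\frac{y\,\partial_i y-\langle x-t,\partial_i x\rangle+(x^i-t^i)}{\sqrt{y^2-|x-t|^2}},
\]
where $e_i$ denotes the $i$-th canonical basis vector. On the other hand, using $g=(x-t)/y$ one has $|g|^2=|x-t|^2/y^2$ and hence the right-hand side of \eqref{necandsuf:g} has $i$-th component
\[
\frac{g_i}{\sqrt{1-|g|^2}}=\frac{x^i-t^i}{\sqrt{y^2-|x-t|^2}}.
\]

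Next I would set $\partial_i f$ equal to $g_i/\sqrt{1-|g|^2}$. The term $(x^i-t^i)$ in the numerator cancels against the right-hand side, and after clearing the denominator $\sqrt{y^2-|x-t|^2}$ the remaining identity becomes
\[
\partial_i y\bigl(\sqrt{y^2-|x-t|^2}+y\bigr)=\langle x-t,\partial_i x\rangle.
\]
Since $\sqrt{y^2-|x-t|^2}+y=y\bigl(1+\sqrt{1-|g|^2}\bigr)=f$ by \eqref{eq:pot}, and since $(x-t)/y=g$, dividing by $y\bigl(1+\sqrt{1-|g|^2}\bigr)$ yields exactly
\[
\partial_i y=\frac{\langle\partial_i x,g\rangle}{1+\sqrt{1-|g|^2}},
\]
which is \eqref{grad:04}. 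Each step in this chain is an equivalence, so the two conditions are equivalent.

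The only mild obstacle is keeping the two equivalent presentations $f=y+\sqrt{y^2-|x-t|^2}$ and $f=y(1+\sqrt{1-|g|^2})$ in sync, so that the factor $\sqrt{y^2-|x-t|^2}+y$ produced by the differentiation can be recognised as $f$ itself and absorbed into the denominator $1+\sqrt{1-|g|^2}$. Apart from this bookkeeping, the proof is a direct computation and no deeper tool beyond the chain rule and the definitions \eqref{intro:g}, \eqref{eq:pot} is required.
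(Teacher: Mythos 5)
Your computation is correct and follows essentially the same route as the paper: differentiate $f=y+\sqrt{y^2-|x-t|^2}$, observe that the $(x^i-t^i)$ term cancels against the componentwise form of \eqref{necandsuf:g}, clear the denominator, and divide by $y+\sqrt{y^2-|x-t|^2}=y\bigl(1+\sqrt{1-|g|^2}\bigr)$. No differences worth noting.
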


\begin{proof}
In terms of the functions $x(t)$ and $y(t)$, equation \eqref{necandsuf:g} is equivalent to
$$\frac{x(t)-t}{\sqrt{y^2(t)-|x(t)-t|^2}}=\nabla f (t) \qquad (t\in \mathbb{R}^n),$$
\begin{equation}
\label{grad:01}
\frac{x^i(t)-t^i}{\sqrt{y^2(t)-|x(t)-t|^2}}=\partial_i f (t) \qquad (t\in \mathbb{R}^n, \ i=1, \ldots, n),
\end{equation}
where
$$\partial_i f (t)=\partial_i y (t)+\frac{y(t)\partial_i y (t)-\langle \partial_i x (t)-e_i, x(t)-t\rangle}{\sqrt{y^2(t)-|x(t)-t|^2}}$$
and $e_1$, $\ldots$, $e_n$ is the canonical basis in $\mathbb{R}^n$. Therefore equation \eqref{grad:01} is equivalent to 
$$
 0=\partial_i y (t)+\frac{y(t)\partial_i y (t)-\langle \partial_i x (t), x(t)-t\rangle}{\sqrt{y^2(t)-|x(t)-t|^2}} \qquad (t\in \mathbb{R}^n, \ i=1, \ldots, n),
$$
\begin{equation}
\label{grad:03}
 \langle \partial_i x (t), x(t)-t\rangle=y(t)\partial_i y(t)+\sqrt{y^2(t)-|x(t)-t|^2}\partial_i y (t) \qquad (t\in \mathbb{R}^n, \ i=1, \ldots, n),
\end{equation}
and we have equation \eqref{grad:04} by dividing with
$$y(t)+\sqrt{y^2(t)-|x(t)-t|^2}=y(t)\left(1+\sqrt{1-|g(t)|^2}\right).$$
\end{proof}

\begin{theorem} 
\label{charac:param}
Let $x\colon \mathbb{R}^n\to \mathbb{R}^n$ and $y\colon \mathbb{R}^n\to \mathbb{R}^+$ be continuously differentiable functions and suppose that the norm of
$$g(t)=\frac{x(t)-t}{y(t)} \quad (t\in \mathbb{R}^n)$$ 
is less than one, the mapping 
$$X:=\frac{g}{\sqrt{1-|g|^2}}$$	
satisfies the monotonicity property $\langle X(t_1)-X(t_2), t_1-t_2\rangle\geq 0$ and the partial derivatives of $x$ and $y$ are related as
$$\frac{\langle \partial_i x , g\rangle}{1+\sqrt{1-|g|^2}}=\partial_i y \qquad (i=1, \ldots,n).$$
Then the functions $x$ and $y$ give the equidistant parameterization for the graph of the equidistant function belonging to
\begin{equation}
\label{def:f}
f(t)=y(t)\left(1+\sqrt{1-|g(t)|^2}\right)=y(t)+\sqrt{y^2(t)-|x(t)-t|^2}.
\end{equation}
\end{theorem}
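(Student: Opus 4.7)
The plan is to show that the function $f$ defined by \eqref{def:f} is a positive-valued, twice continuously differentiable, convex function on $\mathbb{R}^n$, and that the parametric formulas \eqref{eq:x} and \eqref{eq:y} constructed from this $f$ via Theorem \ref{parametric} reproduce the given pair $(x,y)$. Positivity of $f$ is immediate from $y>0$ and $|g|<1$. Since $y$ is $C^1$ and strictly positive, $g=(x-t)/y$ is $C^1$, so $\sqrt{1-|g|^2}$ is $C^1$ and bounded away from zero; hence $f$ is at least $C^1$.

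The central step is to prove $\nabla f = X = g/\sqrt{1-|g|^2}$. This is the converse of the computation in the proof of Theorem \ref{thm:grad}. I would differentiate $f=y(1+\sqrt{1-|g|^2})$ directly, then use the identity $yg\equiv x-t$ which gives $y\,\partial_i g = \partial_i x - e_i - \partial_i y\cdot g$, and substitute to express $y\langle g,\partial_i g\rangle$ as $\langle g,\partial_i x\rangle - g^i - \partial_i y\,|g|^2$. Applying the hypothesis in the form $\langle \partial_i x,g\rangle = \partial_i y\,(1+\sqrt{1-|g|^2})$, the combination $\langle \partial_i x,g\rangle(\sqrt{1-|g|^2}-1)$ collapses to $-\partial_i y\,|g|^2$, which cancels cleanly and leaves $\partial_i f = g^i/\sqrt{1-|g|^2}$, i.e.\ $\nabla f = X$. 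Because $g$ is $C^1$, $\nabla f$ is $C^1$, so $f$ is automatically $C^2$. The assumed monotonicity of $X=\nabla f$ is then precisely the convexity of $f$, so the hypotheses of Theorem \ref{parametric} are met.

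It remains to check that the equidistant parameterization produced by Theorem \ref{parametric} from this $f$ is the given pair $(x,y)$. Using $\nabla f = g/\sqrt{1-|g|^2}$ one gets $|\nabla f|^2 = |g|^2/(1-|g|^2)$ and $\sqrt{1+|\nabla f|^2} = 1/\sqrt{1-|g|^2}$, from which
\[
\frac{f(t)}{1+\sqrt{1+|\nabla f|^2(t)}} = y(t)\sqrt{1-|g(t)|^2},\qquad \frac{f(t)\sqrt{1+|\nabla f|^2(t)}}{1+\sqrt{1+|\nabla f|^2(t)}}=y(t).
\]
Substituting in \eqref{eq:x} gives $t + y\sqrt{1-|g|^2}\cdot g/\sqrt{1-|g|^2} = t + yg = x(t)$, and \eqref{eq:y} gives $y(t)$, as required.

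The main obstacle is the algebraic identification $\nabla f = X$: all the nontrivial content sits in tracking how the identity $yg=x-t$ combines with the hypothesis \eqref{grad:04} to cancel the $\partial_i y\,|g|^2$ terms. Once that is in hand, convexity comes for free from monotonicity, and the consistency of the parameterization is a routine simplification based on $1+|\nabla f|^2 = 1/(1-|g|^2)$.
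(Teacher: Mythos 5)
Your proposal is correct and follows essentially the same route as the paper: define $f$ by \eqref{def:f}, show $\nabla f = g/\sqrt{1-|g|^2}$ (the paper simply cites Theorem \ref{thm:grad} for this equivalence, whereas you redo the differentiation inline, with the same cancellation), deduce $C^2$ regularity and convexity from the monotonicity hypothesis, and then verify via $\sqrt{1+|\nabla f|^2}=1/\sqrt{1-|g|^2}$ that the given pair $(x,y)$ reproduces the parametric formulas \eqref{eq:x} and \eqref{eq:y}. No substantive difference.
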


\begin{proof} Let us introduce the function $f$ by the formula \eqref{def:f}. Using Theorem \ref{thm:grad}, we can reproduce formula \eqref{necandsuf:g} as
\begin{equation}
\label{def:grad} \frac{g}{\sqrt{1-|g|^2}}=\nabla f.
\end{equation}
Since the functions $x$ and $y$ appear in the expression of $\nabla f$ without their derivatives, it follows that $f$ is a twice continuously differentiable function. It is also convex because of the monotonicity property. Using \eqref{def:f} and \eqref{def:grad} we can reproduce formulas \eqref{eq:x} and \eqref{eq:y} as follows: 
$$1+|\nabla f|^2=1+\frac{|g|^2}{1-|g|^2} \ \Rightarrow\ \sqrt{1+|\nabla f|^2}=\frac{1}{\sqrt{1-|g|^2}}$$
and
$$1+\sqrt{1+|\nabla f|^2}=\frac{1+\sqrt{1-|g|^2}}{\sqrt{1-|g|^2}}.$$
Therefore
$$\frac{f(t)}{1+ \sqrt{1+|\nabla f|^{2}(t)}}\nabla f(t)=\frac{f(t) \sqrt{1-|g(t)|^2}}{1+\sqrt{1-|g(t)|^2}}\frac{g(t)}{\sqrt{1-|g(t)|^2}}=y(t)g(t)=x(t)-t$$
and
$$\frac{f(t)\sqrt{1+|\nabla f|^2(t)}}{1+ \sqrt{1+|\nabla f|^2}(t)}=\frac{f(t)}{1+\sqrt{1-|g(t)|^2}}=y(t).$$ 
\end{proof}

\subsection{The characterization of equidistant functions}
\label{characterization}

In what follows we are going to investigate the equidistant function of the form
\begin{equation}
\label{intro:G}
y(t) = G(x(t)),\ \ \textrm{where}\ \ G\colon \mathbb{R}^n \mapsto \mathbb{R}^+.
\end{equation}
It is a convex (Lemma \ref{basic:04}), continuously differentiable (Corollary \ref{specformG:01}) and positive-valued function. Differentiating equation \eqref{intro:G},
$$\langle \partial_i x, \nabla G \circ x \rangle=\partial_i y \qquad (i=1, \ldots, n).$$
By Theorem  \ref{analysis}, the partial derivatives of the mapping $x$ form a basis. Using Theorem \ref{thm:grad}, 
\begin{equation}
\label{eq:key0}
 \nabla G\circ x(t)=\frac{g(t)}{1+\sqrt{1-|g(t)|^2}}=\frac{x(t)-t}{y(t)+\sqrt{y^2(t)-|x(t)-t|^2}}=
\end{equation}
$$\frac{x(t)-t}{G\circ x(t)+\sqrt{G^2\circ x(t)-|x(t)-t|^2}}.$$
In particular, by Theorem \ref{charac:param},
\begin{equation}
\label{normG:01} |\nabla G| \circ x(t)=\frac{|g(t)|}{1+\sqrt{1-|g(t)|^2}}<1 \quad (t\in \mathbb{R}^n).
\end{equation}
Since the functions $x$ and $y$ appear in the expression of $\nabla G$ without their derivatives, $G$ is a twice continuously differentiable function. By some further straightforward computations,
$$\sqrt{G^2\circ x(t)-|x(t)-t|^2} \nabla G\circ x(t)=\left(x(t)-t\right) -G\circ x(t) \nabla G\circ x(t).$$
Taking the norm square of both sides,
$$\left(G^2\circ x(t)-|x(t)-t|^2\right) |\nabla G|^2\circ x(t)=$$
$$|x(t)-t|^2 -2G\circ x(t) \langle x(t)-t, \nabla G \circ x(t) \rangle+G^2\circ x(t) |\nabla G|^2 \circ x(t).$$
Subtracting the common terms, we can compute the Fourier coefficient of $\nabla G\circ x (t)$ with respect to the difference vector $x(t)-t$ as follows:
$$-|x(t)-t|^2 |\nabla G|^2\circ x(t)=|x(t)-t|^2 -2G\circ x(t) \langle x(t)-t, \nabla G \circ x(t) \rangle$$
and, consequently, 
$$
 \frac{\langle x(t)-t, \nabla G \circ x(t) \rangle}{|x(t)-t|^2}=\frac{1+|\nabla G|^2\circ x(t)}{2G\circ x(t)} \ \Rightarrow \ \nabla G\circ x(t)=\frac{1+|\nabla G|^2\circ x(t)}{2G\circ x(t)} \left(x(t)-t\right)
$$
provided that $x(t)\neq t$. Therefore
\begin{equation}
\label{eq:key}
 x(t)-t=\frac{2G\circ x(t)}{1+|\nabla G|^2\circ x(t)} \nabla G\circ x(t) \ \Rightarrow\ t=x(t)-\frac{2G\circ x(t)}{1+|\nabla G|^2\circ x(t)} \nabla G\circ x(t).
\end{equation}
Formula \eqref{eq:key} also holds in the case of $x(t)=t$ because the vanishing of the difference vector $x(t)-t$ implies that $\nabla G\circ x(t)=0$ in the sense of \eqref{eq:key0}. Introducing the mapping
\begin{equation}
\label{eq:defH}
H(x)=x-\frac{2G(x)}{1+|\nabla G (x)|^2} \nabla G(x) \quad \Rightarrow \quad H\circ x(t)=t,
\end{equation}
we can express the functions $x$ and $y$ in terms of $G$ as follows: 
\begin{equation}                
\label{eq:equidf} x(t)=H^{-1}(t) \quad \textrm{and}\quad  y(t)=G\circ x (t)
\end{equation}
for any $t\in \mathbb{R}^n$. Using Theorem \ref{charac:param}, the following result can be formulated. 

\begin{theorem}
\label{equidfunc:flat}
A twice continuously differentiable, convex and positive-valued function $G\colon \mathbb{R}^n \to \mathbb{R}^+$ is an equidistant function if and only if 
\begin{equation}
\label{def:H}
H(x) := x - \frac{2 G(x)}{1+|\nabla G(x)|^2} \nabla G(x)
\end{equation}
is a one-to-one correspondence with nowhere vanishing Jacobian, $|\nabla G(x)|<1$ for any $x\in \mathbb{R}^n$ and the mapping
$$Y=\frac{2\nabla G}{1-|\nabla G|^2}$$
satisfies the monotonicity property 
\begin{equation}
\label{monotonicity}
\langle Y(x_1)-Y(x_2), H(x_1)-H(x_2)\rangle\geq 0.
\end{equation}
\end{theorem}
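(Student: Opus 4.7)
The statement is a biconditional, so I would prove the two directions separately. The necessity direction is essentially harvesting the computations \eqref{eq:key0}--\eqref{eq:equidf} together with \eqref{normG:01} that precede the theorem, while the sufficiency direction reduces to applying Theorem~\ref{charac:param} to a canonical parameterization constructed from $G$ via $H$.

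For necessity, suppose $G$ is an equidistant function belonging to $f$ with equidistant parameterization $(x(t),y(t))$. Formula \eqref{eq:defH} shows $H\circ x=\textrm{id}$, and $x$ is one-to-one with nowhere vanishing Jacobian by Theorem~\ref{analysis}; together with the relation $x=H^{-1}$ this forces $H$ to be a bijection with nowhere vanishing Jacobian. Inequality $|\nabla G|<1$ is exactly \eqref{normG:01}. For the monotonicity of $Y$, I would use the substitution $u=\sqrt{1-|g|^2}$ to compute, from \eqref{eq:key0}, that $|\nabla G|^2\circ x=\frac{1-u}{1+u}$, hence $1-|\nabla G|^2\circ x=\frac{2u}{1+u}$ and
\[
Y\circ x(t)=\frac{2\nabla G\circ x(t)}{1-|\nabla G|^2\circ x(t)}=\frac{g(t)}{\sqrt{1-|g(t)|^2}}=X(t)=\nabla f(t).
\]
Since $f$ is convex, $\nabla f$ is monotone, i.e.\ $\langle X(t_1)-X(t_2),t_1-t_2\rangle\geq 0$; translating back via $t_i=H(x_i)$ yields exactly the monotonicity condition \eqref{monotonicity}.

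For sufficiency, I would define $x:=H^{-1}$ and $y:=G\circ x$; by the assumed regularity of $H$ both are continuously differentiable. The identity $H\circ x=\textrm{id}$ gives
\[
x(t)-t=\frac{2G\circ x(t)}{1+|\nabla G|^2\circ x(t)}\nabla G\circ x(t),
\]
so $g(t)=\frac{x(t)-t}{y(t)}=\frac{2\nabla G\circ x(t)}{1+|\nabla G|^2\circ x(t)}$. A direct computation shows
\[
1-|g|^2=\left(\frac{1-|\nabla G|^2\circ x}{1+|\nabla G|^2\circ x}\right)^2,
\]
and since $|\nabla G|<1$ the square root $\sqrt{1-|g|^2}=\frac{1-|\nabla G|^2\circ x}{1+|\nabla G|^2\circ x}$ is strictly positive, giving $|g|<1$. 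The same formulas imply $\frac{g}{\sqrt{1-|g|^2}}=Y\circ x$ and $1+\sqrt{1-|g|^2}=\frac{2}{1+|\nabla G|^2\circ x}$; using the chain rule on $y=G\circ x$ I would then verify
\[
\frac{\langle\partial_i x,g\rangle}{1+\sqrt{1-|g|^2}}=\langle\partial_i x,\nabla G\circ x\rangle=\partial_i y.
\]
Finally, the hypothesis \eqref{monotonicity} on $Y$ transfers to the monotonicity of $X=\frac{g}{\sqrt{1-|g|^2}}$ in $t$, because $t_i=H(x(t_i))$.

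All hypotheses of Theorem~\ref{charac:param} are thus in place, so $(x,y)$ is the equidistant parameterization for the equidistant function belonging to the convex $C^2$ function $f(t)=y(t)(1+\sqrt{1-|g(t)|^2})$. The subtle closing point, which I expect to be the main place to be careful, is to conclude that this equidistant function is $G$ itself: since $x\colon\mathbb{R}^n\to\mathbb{R}^n$ is a bijection and $y=G\circ x$, the image $\{(x(t),y(t))\mid t\in\mathbb{R}^n\}$ of the parameterization coincides with the graph of $G$, and by Corollary~\ref{specformG:01} this image is the graph of the equidistant function. Hence $G$ is the equidistant function associated to $f$, as required.
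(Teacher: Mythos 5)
Your proposal is correct and follows essentially the same route as the paper: sufficiency by feeding the parameterization $x=H^{-1}$, $y=G\circ x$ into Theorem~\ref{charac:param}, and necessity by harvesting the computations \eqref{normG:01}--\eqref{eq:equidf} preceding the theorem (which the paper simply cites, while you helpfully spell out the identification $Y\circ x=\nabla f$ and the final step that the resulting equidistant function is $G$ itself).
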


\begin{proof} Let us introduce the parameterization $x(t):=H^{-1}(t)$ and $y(t)=G\circ x(t)$ for the points of the graph of $G$. We are going to prove that it is an equidistant parameterization by checking the conditions in Theorem \ref{charac:param}. By substituting $x=x(t)$ in formula \eqref{def:H},  
$$g(t)=\frac{x(t)-t}{y(t)}=\frac{2\nabla G}{1+|\nabla G|^2} \circ x(t).$$
Its norm is less than one because $|\nabla G(x)|<1$ for any $x\in \mathbb{R}^n$. On the other hand
$$\sqrt{1-|g|^2}=\frac{1-|\nabla G|^2}{1+|\nabla G|^2}\circ x \quad \Rightarrow \quad X=\frac{g}{\sqrt{1-|g|^2}}=\frac{2 \nabla G}{1-|\nabla G|^2} \circ x=Y\circ x$$
satisfies the monotonicity property by substituting $x_1=x(t_1)$ and $x_2=x(t_2)$ in formula \eqref{monotonicity}. As a straightforward computation shows,
$$1+\sqrt{1-|g|^2}=\frac{2}{1+|\nabla G|^2}\circ x \quad \Rightarrow \quad \frac{g}{1+\sqrt{1-|g|^2}}=\nabla G\circ x$$
and the relation
$$\frac{\langle \partial_i x , g\rangle}{1+\sqrt{1-|g|^2}}=\partial_i y \qquad (i=1, \ldots,n)$$
is automatically satisfied because of $y=G\circ x$. Therefore the functions $x$ and $y$ give the equidistant parameterization for the graph of the equidistant function $G$ belonging to 
$$f(t)=y(t)\left(1+\sqrt{1-|g(t)|^2}\right)=y(t)+\sqrt{y^2(t)-|x(t)-t|^2}.$$
The converse statement follows by the results and computations in Subsections \ref{parameterization} and \ref{characterization}. 
\end{proof}

\begin{corollary}
\label{envelope:parabolas} The graph of the equidistant function $G$ is the envelope of the parametric family of paraboloids given by
$$y=\frac{|x-t|^2}{2f(t)}+\frac{f(t)}{2} \qquad (t\in \mathbb{R}^n).$$
\end{corollary}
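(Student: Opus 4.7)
The plan is to identify each member of the family as the focal paraboloid with focus $(t,f(t))\in L$ and directrix $K$, and then to derive the envelope property directly from the equidistant set definition together with Theorems \ref{parametric} and \ref{analysis}.

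First, I would observe that for $y>0$ the equation $y=\frac{|x-t|^2}{2f(t)}+\frac{f(t)}{2}$ is equivalent, after squaring, to
\[
\sqrt{|x-t|^2+(y-f(t))^2}=y.
\]
Hence the $t$-th paraboloid is exactly the set of points equidistant from the single focus $(t,f(t))$ and the hyperplane $K=\{y=0\}$. Writing $P_t(x):=\frac{|x-t|^2}{2f(t)}+\frac{f(t)}{2}$ for brevity, the chain
\[
G(x)=d\bigl((x,G(x)),L\bigr)\leq d\bigl((x,G(x)),(t,f(t))\bigr)
\]
squared and rearranged gives $G(x)\leq P_t(x)$ for every $x,t\in\mathbb{R}^n$. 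Thus the graph of $G$ lies weakly below every member of the family.

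Second, I would exhibit an explicit contact point for each parameter. By Theorem \ref{parametric} the pair $(x(t),y(t))$ lies on the graph of $G$, and by the very construction of the parameterization $(t,f(t))$ is the closest point of $L$ to $(x(t),y(t))$, so $y(t)=d((x(t),y(t)),(t,f(t)))$; that is, $(x(t),y(t))$ lies on the $t$-th paraboloid as well, giving $G(x(t))=P_t(x(t))$. Combined with the pointwise inequality above, the smooth difference $P_t-G$ attains a minimum at $x(t)$ (smoothness of $G$ comes from $G=y\circ x^{-1}$ via Corollary \ref{specformG:01} together with Theorem \ref{equidfunc:flat}), which forces $\nabla G(x(t))=\nabla P_t(x(t))$. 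Setting $F(x,y,t):=y-P_t(x)$, this is precisely the classical envelope system $F=0$, $\partial_tF=0$ at the point $(x(t),y(t))$. Finally, by the bijectivity of $t\mapsto x(t)$ from Theorem \ref{analysis}, every point of the graph of $G$ is realised as $(x(t),y(t))$ for a unique $t$, so the graph of $G$ coincides with the envelope of the family.

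I do not foresee a genuine obstacle: the focus-directrix description of each paraboloid, the equidistant inequality, the nearest-point interpretation of the parameterization, and the bijectivity of $t\mapsto x(t)$ are already in place. The only point requiring care is cleanly separating the two facets of the envelope---the pointwise bound $G\leq P_t$ (so that the graph is the lower boundary of the family) and the tangency $\nabla G(x(t))=\nabla P_t(x(t))$ (which supplies the full envelope system)---and noting that smoothness is what promotes the former at a contact point to the latter.
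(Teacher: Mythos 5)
Your proposal is correct, and it reaches the conclusion by a noticeably different mechanism than the paper. The paper verifies the contact $y(t)=\frac{|x(t)-t|^2}{2f(t)}+\frac{f(t)}{2}$ by direct substitution of the parametric formulas \eqref{eq:x} and \eqref{eq:y}, and then establishes tangency by an explicit computation showing that the gradient $\left(\frac{x(t)-t}{f(t)},-1\right)$ of the paraboloid at the contact point coincides with the outer normal $\left(\nabla G\circ x(t),-1\right)$ of the graph of $G$, using formulas \eqref{eq:pot} and \eqref{eq:key0}; the bounding statement is phrased via the tangent hyperplanes of the convex function $G$. You instead exploit the focus--directrix description of each paraboloid: the inequality $d((x,G(x)),L)\leq d((x,G(x)),(t,f(t)))$ immediately yields the global one-sided bound $G\leq P_t$, the closest-point interpretation of the parameterization gives equality at $x(t)$, and smoothness then forces $\nabla G(x(t))=\nabla P_t(x(t))$ by the first-derivative test at the minimum of $P_t-G$ --- no gradient computation needed. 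Your route buys conceptual transparency (the tangency is automatic once the one-sided touching is known) and makes the ``lower boundary of the family'' picture explicit, while the paper's computation is self-contained within its parametric formalism and exhibits the common normal concretely. The only step you state without justification is that the tangency condition $\nabla G(x(t))=\nabla P_t(x(t))$ is ``precisely'' the envelope equation $\partial_tF=0$: this does follow, but by differentiating the identity $G(x(t))=P_t(x(t))$ in $t$ and cancelling $\langle\nabla G(x(t)),\partial_vx\rangle$ against $\langle\nabla_xP_t(x(t)),\partial_vx\rangle$; one line suffices, but it should be said.
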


\begin{proof} Using the parametric expressions \eqref{eq:x} and \eqref{eq:y}, a straightforward calculation shows that
$$y(t)=\frac{|x(t)-t|^2}{2f(t)}+\frac{f(t)}{2}$$
for any $t\in \mathbb{R}^n$. On the other hand, 
$$\left(\frac{x(t)-t}{f(t)}, -1\right)=\left(\nabla G \circ x(t),-1\right),$$
that is the gradient vector field of the paraboloid at the contact point is the outer normal vector field of the graph of the equidistant function $G$ as 
$$f(t)=y(t)+\sqrt{y^2(t)-|x(t)-t|^2}$$
(formula \eqref{eq:pot}) and 
$$\nabla G\circ x(t)=\frac{g(t)}{1+\sqrt{1-|g(t)|^2}}=\frac{x(t)-t}{y(t)+\sqrt{y^2(t)-|x(t)-t|^2}}$$
(formula \eqref{eq:key0}) show. Since the focal point is running along the graph of the function $f$ we can also conclude that any tangent hyperplane of the equidistant function $G$ bounds all the paraboloids because the points under the tangent hyperplanes of $G$ are closer to the hyperplane $K=\mathbb{R}^n\times \{0\}\subset \mathbb{R}^{n+1}$ than to the epigraph of the function $f$. 
\end{proof}

\section{An example}

\subsection{Univariate functions} Consider the very special case of dimension $n=1$ to find the equidistant function corresponding to a function $f$ of one variable. In particular, $\nabla f=f'$. Let's choose for example the function $f\colon \mathbb{R}\to\mathbb{R}$, $f(t)=\sqrt{t^2+1}$. Using formulas \eqref{eq:x} and \eqref{eq:y},
	\[x(t)=t+\frac{t}{1+\sqrt{1+\frac{t^2}{t^2+1}}}=t+\frac{t\sqrt{t^2+1}}{\sqrt{t^2+1}+\sqrt{2t^2+1}}
\]
	\[y(t)=\frac{\sqrt{t^2+1}\sqrt{1+\frac{t^2}{t^2+1}}}{1+\sqrt{1+\frac{t^2}{t^2+1}}}=\frac{\sqrt{t^2+1}\sqrt{2t^2+1}}{\sqrt{t^2+1}+\sqrt{2t^2+1}}
\]To find the inverse of the function $x\colon \mathbb{R}\to\mathbb{R}$ we solve the following parametric equation for $t$:
\begin{equation}
\label{cubic}
x=t+\frac{t\sqrt{t^2+1}}{\sqrt{t^2+1}+\sqrt{2t^2+1}}
\end{equation}Then
	\[(x-t)\left(\sqrt{t^2+1}+\sqrt{2t^2+1}\right)=t\sqrt{t^2+1}
\]
	\[(x-t)\sqrt{2t^2+1}=(2t-x)\sqrt{t^2+1}
\]
	\[(x-t)^2(2t^2+1)=(2t-x)^2(t^2+1)
\]
	\[-2t^4+x^2t^2-3t^2+2xt=0
\]
	\[-2t\left(t^3-\frac{x^2-3}{2}\,t-x\right)=0
\]The solution $t=0$ corresponds to $x(0)=0$. Let's focus on the cubic equation
\begin{equation}
\label{cubic01} 
t^3-\frac{x^2-3}{2}\,t-x=0.
\end{equation}We know from the theory of the cubic equations that if
\begin{equation}
\label{cubic02} 
\frac{x^2}{4}-\frac{(x^2-3)^3}{216}>0
\end{equation}then the only real solution of the equation \eqref{cubic01} for $t$ is
\begin{equation}
\label{cubic03} 
t=\sqrt[3]{\frac{x}{2}+\sqrt{\frac{x^2}{4}-\frac{(x^2-3)^3}{216}}}+\sqrt[3]{\frac{x}{2}-\sqrt{\frac{x^2}{4}-\frac{(x^2-3)^3}{216}}}.
\end{equation}A complete analysis shows that the inequality \eqref{cubic02} is satisfied exactly when $x_1<x<x_2$, where
	\[x_1=-\sqrt{3} \sqrt{\sqrt[3]{4}+\sqrt[3]{2}+1}\ \ \textrm{and}\ \ x_2=\sqrt{3} \sqrt{\sqrt[3]{4}+\sqrt[3]{2}+1}
\] By applying a simple continuity argument, substituting $x_1$ or $x_2$ for $x$ in the formula \eqref{cubic03} yields a solution of \eqref{cubic01}. Hence the formula \eqref{cubic03} remains valid over the closed interval $\left[x_1,x_2\right]$. In what follows we are looking for the roots in case of $x<x_1$ or $x_2<x$. It's easy to see that, if $x>0$, then the equation \eqref{cubic} can only have positive solutions, while if $x<0$, then the equation \eqref{cubic} can only have negative solutions for $t$. Therefore we are looking for the positive real root of the equation \eqref{cubic01} in case of 
$x_2< x$ and the root we are looking for is negative in case of $x<x_1$.
If $x<x_1$ or $x_2<x$, then 
	\[\frac{x^2}{4}-\frac{(x^2-3)^3}{216}<0
\]and the two complex square roots of the left hand side are
	\[\pm i\,\sqrt{-\frac{x^2}{4}+\frac{(x^2-3)^3}{216}},
\]where $\sqrt{\phantom{1}}$ denotes the real square root function defined on the set of positive real numbers. Let's define the following complex numbers
	\[z_1=\frac{x}{2}+i\,\sqrt{-\frac{x^2}{4}+\frac{(x^2-3)^3}{216}}\ \ \textrm{and}\ \ z_2=\frac{x}{2}-i\,\sqrt{-\frac{x^2}{4}+\frac{(x^2-3)^3}{216}}.
\]Let the trigonometric form of $z_1$ and $z_2$ be
	\[z_1=\sqrt{\frac{(x^2-3)^3}{216}}\,\big(\cos(\alpha_1)+i\,\sin(\alpha_1)\big)\ \ \textrm{and}\ \ z_2=\sqrt{\frac{(x^2-3)^3}{216}}\,\big(\cos(\alpha_2)+i\,\sin(\alpha_2)\big).
\]The complex third roots of $z_j$ are
	\[\left(\sqrt[3]{z_j}\right)_{k+1}=\sqrt{\frac{x^2-3}{6}}\,\left(\cos\left(\frac{1}{3}\alpha_j+\frac{2k\pi}{3}\right)+i\,\sin\left(\frac{1}{3}\alpha_j+\frac{2k\pi}{3}\right)\right)
\]for $j\in\left\{1,2\right\}$ and $k\in\left\{0,1,2\right\}$. Now the equation \eqref{cubic01} has three real roots:
	\[t_1=\left(\sqrt[3]{z_1}\right)_1+\left(\sqrt[3]{z_2}\right)_1,\quad t_2=\left(\sqrt[3]{z_1}\right)_2+\left(\sqrt[3]{z_2}\right)_2,\quad t_3=\left(\sqrt[3]{z_1}\right)_3+\left(\sqrt[3]{z_2}\right)_3.
\]If $x_2<x$, then
	\[\alpha_{1,2}=\pm\arctan\left(\sqrt{-1+\frac{4}{\ x^2}\,\frac{(x^2-3)^3}{216}}\right)=\pm\arctan\left(\sqrt{\frac{(x^2-3)^3}{54x^2}-1}\right).
\] Since
$0<\alpha_1<\frac{\pi}{2}$, the only positive root of \eqref{cubic01} is
\begin{equation}
\label{cubict1} 
t_1=\left(\sqrt[3]{z_1}\right)_1+\left(\sqrt[3]{z_2}\right)_1=2\,\sqrt{\frac{x^2-3}{6}}\,\cos\left(\frac{1}{3}\,\arctan\left(\sqrt{\frac{(x^2-3)^3}{54x^2}-1}\right)\right).
\end{equation}If $x<x_1$, then
	\[\alpha_{1,2}=\pi\mp\arctan\left(\sqrt{-1+\frac{4}{\ x^2}\,\frac{(x^2-3)^3}{216}}\right)=\pi\mp\arctan\left(\sqrt{\frac{(x^2-3)^3}{54x^2}-1}\right).
\] Since $\frac{\pi}{2}< \alpha_1\leq \pi$, the only negative root of \eqref{cubic01} is
\begin{equation}
\label{cubict2}
t_2=\left(\sqrt[3]{z_1}\right)_2+\left(\sqrt[3]{z_2}\right)_2=-2\,\sqrt{\frac{x^2-3}{6}}\,\cos\left(\frac{1}{3}\,\arctan\left(\sqrt{\frac{(x^2-3)^3}{54x^2}-1}\right)\right)
\end{equation}
This finally leads to the following formula for the inverse function of $x\colon \mathbb{R}\to\mathbb{R}$:
\begin{equation}
\label{cubicfinal} 
x^{-1}(s)=\begin{cases}
	-2\,\sqrt{\frac{s^2-3}{6}}\,\cos\left(\frac{1}{3}\,\arctan\left(\sqrt{\frac{(s^2-3)^3}{54s^2}-1}\right)\right), & \textrm{ if } s<x_1,\\
	\\
	\ \ \sqrt[3]{\frac{s}{2}+\sqrt{\frac{s^2}{4}-\frac{(s^2-3)^3}{216}}}+\sqrt[3]{\frac{s}{2}-\sqrt{\frac{s^2}{4}-\frac{(s^2-3)^3}{216}}}, & \textrm{ if } x_1\leq s\leq x_2,\\
	\\
	\ \ 2\,\sqrt{\frac{s^2-3}{6}}\,\cos\left(\frac{1}{3}\,\arctan\left(\sqrt{\frac{(s^2-3)^3}{54s^2}-1}\right)\right), & \textrm{ if } x_2<s.
	\end{cases}
\end{equation}Hence the equidistant function corresponding to $f\colon \mathbb{R}\to\mathbb{R}$, $f(t)=\sqrt{t^2+1}$ is
	\[G\colon \mathbb{R}\to\mathbb{R},\quad G(s)=y\circ x^{-1}(s)=\frac{\sqrt{\left(x^{-1}(s)\right)^2+1}\sqrt{2\left(x^{-1}(s)\right)^2+1}}{\sqrt{\left(x^{-1}(s)\right)^2+1}+\sqrt{2\left(x^{-1}(s)\right)^2+1}},
\]where $x^{-1}(s)$ is defined by formula \eqref{cubicfinal}.

\begin{figure}
\centering     
\includegraphics[scale=0.3]{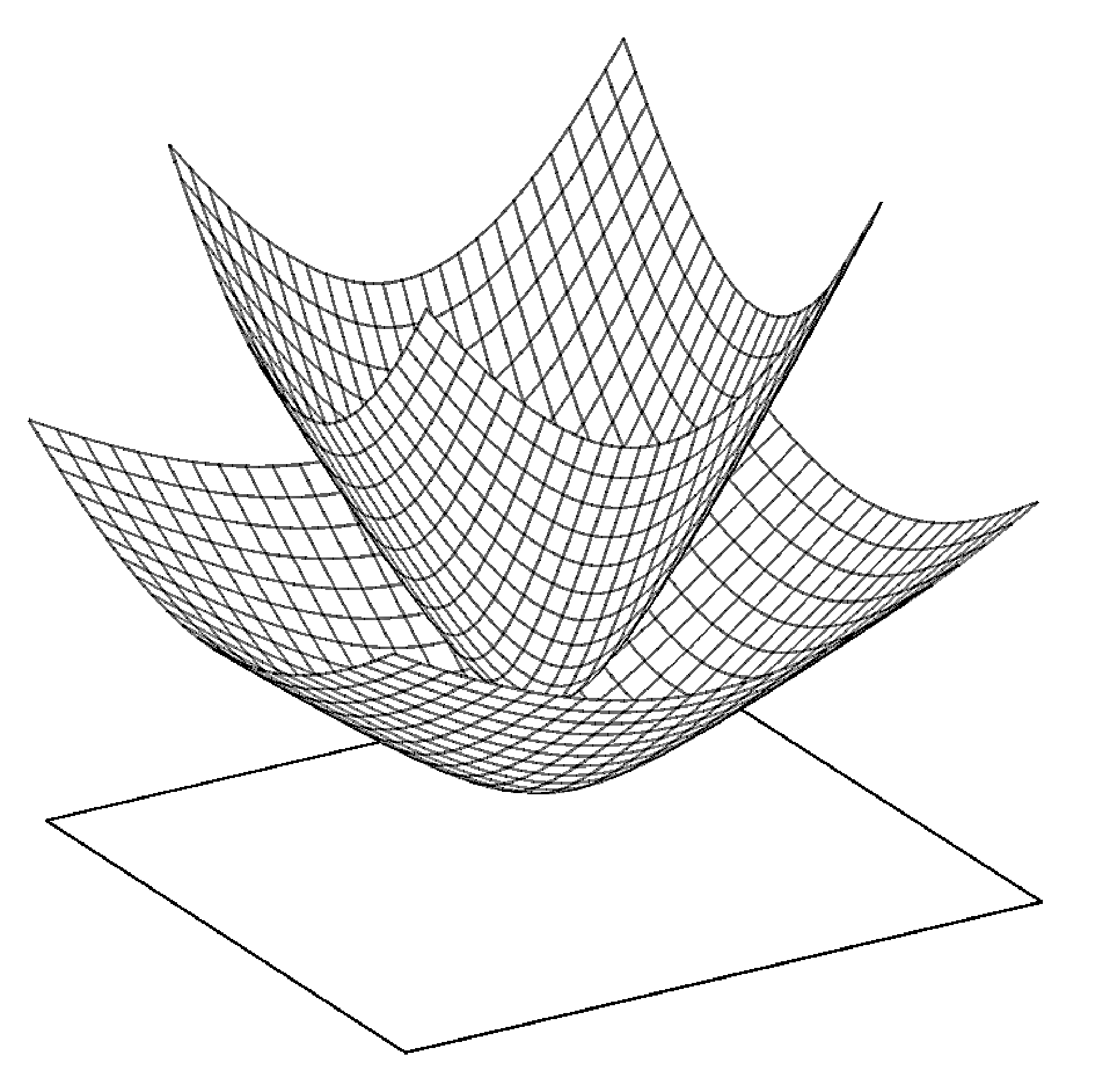}
\caption{The function $f\colon \mathbb{R}^2\to\mathbb{R}$, $f(t)=\sqrt{|t|^2+1}$ and the corresponding equidistant function.}
\end{figure}

\subsection{Multivariate functions} Now let $n\in\mathbb{N}$ be a positive integer $(n\geq 2)$ and consider the function $f\colon \mathbb{R}^n\to\mathbb{R}$, $f(t)=\sqrt{|t|^2+1}$. Then
	\[\nabla f(t)=\frac{t}{\sqrt{|t|^2+1}},\quad f(t) \nabla f(t)=t. 
\] Using formulas \eqref{eq:x} and \eqref{eq:y}, 
	\[x(t)=t+\frac{t}{1+\sqrt{1+\frac{|t|^2}{|t|^2+1}}}=t+\frac{t\sqrt{|t|^2+1}}{\sqrt{|t|^2+1}+\sqrt{2|t|^2+1}}
\]
	\[y(t)=\frac{\sqrt{|t|^2+1}\sqrt{1+\frac{|t|^2}{|t|^2+1}}}{1+\sqrt{1+\frac{|t|^2}{|t|^2+1}}}=\frac{\sqrt{|t|^2+1}\sqrt{2|t|^2+1}}{\sqrt{|t|^2+1}+\sqrt{2|t|^2+1}}
\]To find the inverse of the function $x\colon \mathbb{R}^n\to\mathbb{R}^n$ we solve the following equation for $t$:
\begin{equation}
\label{cubicgen}
x=\left(1+\frac{\sqrt{|t|^2+1}}{\sqrt{|t|^2+1}+\sqrt{2|t|^2+1}}\right)t
\end{equation}Here the coefficient of $t$ is strictly positive, thus $t$ and $x$ have the same direction. Furthermore, if $x=0$ then $t=0$. Otherwise it's enough to know how $|t|$ depends on $x$ to find a formula of the inverse function $x^{-1}\colon \mathbb{R}^n\to\mathbb{R}^n$. The equation \eqref{cubicgen} gives
	\[|x|=|t|+\frac{|t| \sqrt{|t|^2+1}}{\sqrt{|t|^2+1}+\sqrt{2|t|^2+1}}.
\]Then we can apply a similar argumentation as in the case of equation \eqref{cubic} to finally get
	\begin{equation}
\label{cubicgenfinal} 
x^{-1}(s)=\begin{cases}
	\ \ \left(\sqrt[3]{\frac{|s|}{2}+\sqrt{\frac{|s|^2}{4}-\frac{(|s|^2-3)^3}{216}}}+\sqrt[3]{\frac{|s|}{2}-\sqrt{\frac{|s|^2}{4}-\frac{(|s|^2-3)^3}{216}}}\right) \frac{s}{|s|}, & \textrm{ if } 0< |s| \leq x_2,\\
	\\
	\ \ 0, & \ \textrm{if } s=0,\\
	\\
	\ \ 2\,\sqrt{\frac{|s|^2-3}{6}}\,\cos\left(\frac{1}{3}\,\arctan\left(\sqrt{\frac{(|s|^2-3)^3}{54|s|^2}-1}\right)\right) \frac{s}{|s|}, & \textrm{ if } x_2<|s|,
	\end{cases}
\end{equation}where $x_2=\sqrt{3} \sqrt{\sqrt[3]{4}+\sqrt[3]{2}+1}$. Then the equidistant function corresponding to $f\colon \mathbb{R}^n\to\mathbb{R}$, $f(t)=\sqrt{|t|^2+1}$ is
	\[G\colon \mathbb{R}^n\to\mathbb{R},\quad G(s)=y\circ x^{-1}(s)=\frac{\sqrt{\left|x^{-1}(s)\right|^2+1}\sqrt{2\left|x^{-1}(s)\right|^2+1}}{\sqrt{\left|x^{-1}(s)\right|^2+1}+\sqrt{2\left|x^{-1}(s)\right|^2+1}},
\]where $x^{-1}(s)$ is defined by formula \eqref{cubicgenfinal}.

\section{Acknowledgement}

Myroslav Stoika is supported by the Visegrad Scholarship Program. M\'ark Ol\'ah has received funding from the HUN-REN Hungarian Research Network.


\begin{thebibliography}{99}

\bibitem{Loveland} L. D. Loveland, \emph{When midsets are manifolds}, Proc. Amer. Math. Soc. {\bf 61} (2), 1976, pp. 353-360.

\bibitem{PS}
M. Ponce, P. Santib\'a\~nez, \emph{On equidistant sets and generalized conics: the old and the new}, Amer. Math. Monthly, {\bf 121} (1) 2014, pp. 18-32.

\bibitem{VVOFS} Cs.  Vincze, A. Varga, M. Ol\'{a}h, L. F\'{o}ri\'{a}n, S. L\H{o}rinc, On computable classes of equidistant sets: finite focal sets, Involve - a Journal of Math., Vol. 11 (2018), No. 2, pp. 271–282.

\bibitem{VVOF} Cs. Vincze, A. Varga, M. Ol\'{a}h, L. F\'{o}ri\'{a}n, On computable classes of equidistant sets: equidistant functions, Miskolc Math. Notes, Vol. 19 (2018), No. 1, pp. 677-689.

\bibitem{VOL}
Cs. Vincze, M. Oláh, L. Lengyel, On equidistant polytopes in the Euclidean space, Involve - a Journal of Math., Vol. 13 (2020), No. 4, pp. 577–595, DOI: 10.2140/involve.2020.13.577

\bibitem{Wilker} J. B. Wilker, \emph{Equidistant sets and their connectivity properties}, Proc. Amer.
Math. Soc. {\bf{47}} (2), 1975, pp. 446-452. 

\end{thebibliography}
\end{document}